\newtheorem{thm}{Theorem}[section]
\newtheorem{definition}[thm]{Definition}
\newtheorem{lem}[thm]{Lemma}
\newtheorem{remark}[thm]{Remark}
\newtheorem{prop}[thm]{Proposition}
\newtheorem{cor}[thm]{Corollary}
\def\SS{\mathsf{S}}
\def\r{\mathfrak{R}}
\def\wto{\looparrowright}
\def\wast{\ast^\mathcal{E}}
\def\wi{\vartriangle}
\def\m{\mathfrak{M}}
\def\n{\mathfrak{N}}
\def\M{\mathsf{M}}
\def\R{\mathsf{R}}
\def\rthmin{\r^{\tthm}_{\mathsf{min}}}
\def\rthmax{\r^{\tthm}_{\mathsf{max}}}
\def\a{\varphi}
\def\b{\psi}
\def\c{\chi}
\def\for{\mathsf{FOR}}
\def\Sthm{\SS^{\tthm}}
\def\eqm{|\m|_\approx}
\def\model{\langle v,\r\rangle}
\def\nodel{\langle v',\r'\rangle}
\def\splus{\Sigma^+}
\def\th{\mathsf{Th}}
\def\tthm{\th(\m)}
\def\thn{\th(\n)}
\def\vthm{v^{\tthm}}
\def\cpl{\mathsf{CPL}}
\def\splus{\Sigma^+}
\def\Ssplus{\SS^{\splus}}
\def\rsplusmin{\r^{\splus}_{\mathsf{min}}}
\def\rsplusmax{\r^{\splus}_{\mathsf{max}}}
\def\ca2{C^\a_2}
\def\om{\Omega^\m}
\def\sm{\SS^\m}
\def\K{\mathsf{K}}
\def\rF{\mathcal{F}}
\def\vsplus{v^{\splus}}
\def\at{\mathsf{At}}
\def\subst{\mathsf{Subst}}
\def\V{\mathsf{Var}}
\title{Epstein Semantics: Characterization, Interpolation, Undefinability, and (In)Completeness}
\author{Krzysztof A. Krawczyk}
\date{}
\begin{document}
\maketitle
\begin{center}
Abstract
\end{center}
This paper is a mathematical investigation on Epstein semantics. 
One of the main tools of the present paper is the model-theoretic $\SS$-set construction introduced in \cite{ja}. We use it to prove several results: 1) that each Epstein model has uncountably many equivalent Epstein models, 2) that the logic of generalised Epstein models is the $\SS$-set invariant fragment of $\cpl$ (analogon of the celebrated van Benthem characterization theorem for modal logic), 3) that several sets of Epstein models are undefinable, 4) that logics of undefinable sets of relations can be finitely axiomatised. We also use other techniques to prove 
 5) that there is uncountably many Epstein-incomplete logics and that 6) the logic of generalised Epstein models has the interpolation property.
\section{Introduction}
In a series of publications \cite{epstein79, epstein87, epstein90, epstein05}, Richard Epstein has introduced a conceptually simple semantics for non-classical logics. His models are two-element tuples containing standard boolean valuation of propositional variables and a binary relation defined on the set of formulas. Epstein's motivations were mainly philosophical: he wanted to formalise the content-relationship between propositions. However, he extended his research of the so-called relatedness and dependence logics to purely mathematical form. Also, he argued that his semantics can serve as adequate interpretations for other non-classical logics, and can be used as a tool for analysing other, different from the content-relationship, philosophical problems. This perspective has been known as the Epstein programme, see \cite{krajewski82, krajewski86, krajewski91}. Epstein's ideas inspired other researchers in the late seventies \cite{waltona,waltonb}, and studies on Epstein semantics and related topics have been carried out up till nineties \cite{iseminger, carnielli, del, Paoli93, Paoli96}. More recently, Epstein ideas have been picked up in \cite{jarkacz}. Despite some technical deficiencies\footnote{Statement marked as fact has been proven to be false, see \cite[Theorem 3.6 and a comment below]{ja}}, the paper contains a two-folded generalisation of Epstein ideas: relations of the models do not have to meet any conditions, and additional Epstein-style connectives can be introduced. These simple generalisations open a new perspective on the Epstein programme: it can be extended and carried out with greater impetus.

The current work, being a continuation of \cite{ja}, shares a common perspective and ambition with the cited paper: to approach Epstein semantics from a purely technical perspective, and thus, to establish it as a branch of mathematical logic. In the current paper we investigate, exploit and modify the $\SS$-set construction introduced in \cite{ja} for generalised Epstein semantics. We prove that the cardinality of an $\SS$-set is always uncountable. Also, by means of a translation into $\cpl$, we prove an analogon of the well-known van Benthem characterization theorem for modal logic, which will state that generalised Epstein logic is an $\SS$-set invariant fragment of $\cpl$. Later on, we prove completeness theorems for undefinable sets of Epstein models and relations. Both undefinability of a given set of relations/models, as well as completeness theorems are proven by means of an $\SS$--set construction. In the latter case, the construction is modified in a way which enables us to obtain the set of all equivalent models from purely syntactical entities. What is interesting, undefinable sets of structures turn out to be finitely axiomatisable (we do not need to add special rules -- unlike in the case of modal logics). Axiomatisation is achieved by means of special implicational formulas which have the form $...\to(\a\wast\b)\lor\neg(\a\ast\b)$, where $\wast$ stands for an Epstein-style connective. Also, we investigate Epstein-incompleteness - a phenomenon analogous to Kripke-incompleteness in modal logic. We show that the number of Epstein-incomplete logics (the ones that do not have adequate sets of Epstein relations) is $2^{\aleph_0}$. We close the paper with the proof of the interpolation theorem for $\mathcal{F}$. 
\section{Preliminaries}
We shall stick to the language introduced in \cite{jarkacz}. Let $\Phi=\{p_0,p_1,p_2,...\}$ be the set of propositional letters. We have one unary connective: negation $\neg$ and binary connectives: $\lor,\wedge,\to,\leftrightarrow,\wi,\wto$. The set of formulas $\for$ is built in a standard inductive way. We will use $\top$ and $\bot$ as abbreviations for $p_0\lor\neg p_0$ and $\neg(p_0\lor \neg p_0)$ respectively. A substitution is any function $\sigma: \for\longrightarrow\for$ which is an endomorphism of the absolutely free formula algebra with domain $\for$. The set of all substitutions will be denoted by $\subst$.

An Epstein model is an ordered pair: $\m=\model$, where $v:\Phi\longrightarrow\{0,1\}$ is a standard valuation and $\r\subseteq\for\times\for$ is a binary relation defined on the set of formulas. By $\M$ we will denote the set of all models\footnote{We can use the word `set' without committing to a `class' since $\M=\{\model: v\in 2^\Phi, \r\subseteq\for^2\}$}. The set of all relations is $\mathcal{P}(\for^2)$. Let $\a\in\for$, $\m=\model$. We say that $\a$ is true in $\m$, symbolically $\m\vDash\a$ iff: 
\begin{itemize}\itemsep=0pt
\item $v(\a)=1$ for $\a\in\Phi$,
\item $\m\nvDash\b$ for $\a=\neg\b$,
\item $\m\vDash\b$ and $\m\vDash\c$ for $\a=\b\wedge\c$,
\item $\m\vDash\b$ or $\m\vDash\c$ for $\a=\b\lor\c$,
\item $\m\nvDash\b$ or $\m\vDash\c$ for $\a=\b\to\c$,
\item $\m\vDash\b$ iff $\m\vDash\c$ for $\a=\b\leftrightarrow\c$,
\item $\m\vDash\b$ and $\m\vDash\c$ and $\langle\b,\c\rangle\in\r$ for $\a=\b\wi\c$,
\item ($\m\nvDash\b$ or $\m\vDash\c$) and $\langle\b,\c\rangle\in\r$ for $\a=\b\wto\c$.
\end{itemize}
Let $\m\in\M$, $\Sigma\subseteq\for$. We will write $\m\vDash\Sigma$ to indicate that for any $\varphi\in\Sigma$: $\m\vDash\varphi$. Let $\mathsf{K}\subseteq\M$. $\mathsf{K}\vDash\varphi$ means that for any $\m\in\K$ we have $\m\vDash\varphi$. As it is usually done, we will also understand $\vDash$ as a consequence relation. We say $\Sigma\vDash\varphi$ iff for any $\m\in\M$ we have $\m\vDash\Sigma$ implies $\m\vDash\varphi$.
We say that $\r\vDash\a$ iff for any $v$: $\model\vDash\a$. Similarly $\r\vDash\Sigma$ iff for any $\sigma\in\Sigma$: $\r\vDash\sigma$. Let $\R\subseteq\mathcal{P}(\for^2)$ be a set of relations. We say that $\R\vDash\a$ iff for any $\r\in\R$ we have $\r\vDash\a$. Let $\Sigma\subseteq\for$, $\a\in\for$ and $\R$ be some set of relations. We say that $\Sigma\vDash_\R\a$ iff for any $\r\in\R$, any $\m=\model$: $\m\vDash\Sigma$ implies $\m\vDash\a$. One can see an analogy between Epstein relations and the so called frames in modal logic. Note an important difference though: unlike frames, Epstein relations do not necessarily have theories that are closed under substitutions. To give an example, let $\r=\{\langle p,p\rangle\}$. It is easy to see that $\r\vDash p\wto p$ but $\r\nvDash q\wto q$. The difference between the theory of a single model and a single relation theory is that the first one is complete in the sense that for any $\a\in\for$: $\m\vDash\a$ or $\m\vDash\neg\a$, while it is not the case for the latter. Any propositional letter gives an immediate example: both $\r\nvDash p$ and $\r\nvDash\neg p$, for any $\r$.

In \cite{jarkacz} the consequence relation $\vDash$ has been called $\rF$. The authors constructed an adequate tableau deductive system for this consequence relation. Here, we will focus on axiomatic system. Let $\cpl$ be the set of all classical tautologies. Define $\rF$ to be the least set closed under uniform substitution and modus ponens (MP) $\{p, p\to q\}/q$ containing $\cpl$ and two additional axioms: 
\begin{enumerate}
    \item $(p\wto q)\to (p\to q)$
    \item $(p\wi q)\leftrightarrow(p\wto q)\wedge(p\wedge q)$
\end{enumerate}
Let $\Lambda\subseteq\for$. By $\rF\Lambda$ we will denote least sets of formulas that contain $\rF$ and $\Lambda$ which are closed under uniform substitution and MP. Later on, we will refer to such sets as logics. When $\Lambda=\{\varphi\}$ we will write $\rF\varphi$ to simplify notation. 

Analogously to the semantic consequence relation, we define the syntactic notion of derivability. Let $\Sigma\cup\{\varphi\}\subseteq\for$. We say that $\varphi$ is derivable (provable) from $\Sigma$ (shortly $\Sigma\vdash\varphi$) iff there is a standard Hilbert proof of $\varphi$ from $\Sigma$, i.e. finite sequence of formulas whose last element is $\varphi$ and all elements are either members of $\Sigma\cup\rF$, or are obtained from previous ones by means of MP. It is easy to see that for any $\Lambda\subseteq\for$ we have $\varphi\in\rF\Lambda$ iff $\{\sigma(\psi):\psi\in\Lambda,\sigma\in\subst\}\vdash\varphi$.

It is an exercise to check that $\vdash=\vDash$. $\subseteq$ is straightforward. For $\supseteq$ let $\Sigma^+$ be a maximally consistent set\footnote{Textbook Lindenbaum lemma can be used to show that each consistent set can be extended to a maximally consistent one.}. Let $\m=\model$ be such that for any $\psi\in\Phi$: $v(\psi)=1$ iff $\psi\in\Sigma^+$; and $\langle\varphi,\chi\rangle\in\r$ iff $\varphi\wto\chi\in\Sigma^+$. Inductive proof on the complexity of formulas shows that $\m\vDash\Sigma^+$. Base case and formulas built with boolean connectives are obvious. Let $\varphi\wto\chi\in\Sigma^+$. Then, by MP and axiom 1), also $\varphi\to\chi\in\Sigma^+$. Hence $\m\vDash\varphi\to\chi$ and $\langle\varphi,\chi\rangle\in\r$ which gives us $\m\vDash\varphi\wto\chi$. Now assume that $\varphi\wi\chi\in\Sigma^+$. Hence $\varphi\wedge\chi\in\Sigma^+$ and $\varphi\wto\chi\in\Sigma^+$ which means $\m\vDash\varphi\wedge\chi$ and $\langle\varphi,\chi\rangle\in\r$, so $\m\vDash\varphi\wto\chi$.
\section{$\SS$--set basic definitions}
In this section, we recall the notion of an $\SS$--set introduced in \cite{ja} and prove that each Epstein model has uncountably many equivalent Epstein models. 

Given a model $\m=\langle v,\r\rangle$, we define the theory of $\m$ to be the set: $\tthm=\{\a\in\for:\m\vDash\a\}$. Now let us define the relation of a theoretical equivalence.
\begin{definition}[Theoretical equivalence]\label{df model equivalence}
Given the models $\m=\langle v,\r\rangle$, $\n=\langle v',\r'\rangle$ we say that $\m$ and $\n$ are theoretically equivalent, symbolically: $\m\approx\n$ iff $\tthm=\th(\n)$. Obviously $\approx$ is an equivalence relation, hence for arbitrary model by $\eqm$ we shall denote the equivalence class: $\{\n\in\M:\m\approx\n\}$.
\end{definition}

In \cite{ja}, we have already introduced the $\SS$-set construction which enables us to generate the whole equivalence class of a given model with respect to relation $\approx$. Let us remind the construction. Let $\m=\model$. The Omega set of $\m$ is $\om=\{\langle\a,\b\rangle:\m\nvDash\a\to\b\}$. Having specified the Omega set of $\m$ we define the $\SS$-set of $\m$ to be $\sm=\{\nodel:v'=v\,\,and\,\,\r\setminus\om\subseteq\r'\subseteq\r\cup\om\}$. Let us recall that the following holds: 
\begin{lem}[Krawczyk, 2021]\label{f: S fact}
For any $\m=\model$ and $\n=\nodel$, we have: $\n\in\sm$ iff $\n\approx\m$. Hence $\sm=\eqm$. 
\end{lem}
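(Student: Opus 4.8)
The plan is to prove the equivalence $\n \in \sm$ iff $\n \approx \m$ by reducing theoretical equivalence to agreement on the relation modulo the ``problematic'' pairs collected in $\om$. First I would observe that since every $\n = \nodel \in \sm$ has $v' = v$, the two models assign the same truth values to all propositional letters, and so any difference in their theories can only originate from the relatedness connectives $\wi$ and $\wto$. The key semantic observation is that the pair $\langle \a, \b \rangle$ affects the truth value of $\a \wto \b$ (and, via axiom~2, of $\a \wi \b$) only insofar as the relation contains it; but whether this matters depends on whether $\m \vDash \a \to \b$. Concretely, if $\m \nvDash \a \to \b$, then $\m \vDash \a$ and $\m \nvDash \b$, so $\a \wto \b$ is false regardless of whether $\langle \a, \b \rangle$ lies in the relation; these are exactly the pairs in $\om$, and they are ``free'' to be toggled without changing the theory.

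Next I would prove the two directions separately. For the direction $\n \in \sm \Rightarrow \n \approx \m$, I would argue by induction on formula complexity that $\m \vDash \a$ iff $\n \vDash \a$ for every $\a \in \for$. Since $v = v'$, the base case and all boolean cases are immediate by the induction hypothesis. The only interesting cases are $\a = \b \wto \c$ and $\a = \b \wi \c$. For $\b \wto \c$: by the induction hypothesis $\m$ and $\n$ agree on the truth conditions ``$\nvDash \b$ or $\vDash \c$'', so it remains to check that $\langle \b, \c \rangle \in \r \Leftrightarrow \langle \b, \c \rangle \in \r'$ \emph{in the cases where this matters}, namely when $\m \vDash \b$ and $\m \nvDash \c$, i.e. when $\m \vDash \b \to \c$. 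If $\m \vDash \b \to \c$ then $\langle \b, \c \rangle \notin \om$; combined with the constraint $\r \setminus \om \subseteq \r' \subseteq \r \cup \om$ defining $\sm$, this forces $\langle \b, \c \rangle \in \r \Leftrightarrow \langle \b, \c \rangle \in \r'$ on precisely these pairs, and the case goes through. The case $\b \wi \c$ follows either by the same reasoning or directly from axiom~2 once $\wto$ and $\wedge$ are handled.

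For the converse $\n \approx \m \Rightarrow \n \in \sm$, I would assume $\tthm = \thn$ and derive the two defining conditions of $\sm$. First, $v = v'$ follows because the theories agree on propositional letters and $v$ is recoverable from the theory on $\Phi$. Second, I must show $\r \setminus \om \subseteq \r' \subseteq \r \cup \om$. For the left inclusion, take $\langle \a, \b \rangle \in \r \setminus \om$; then $\m \nvDash \a \to \b$ fails, so $\m \vDash \a$, $\m \nvDash \b$, and $\langle \a, \b \rangle \in \r$, whence $\m \vDash \a \wto \b$; by $\tthm = \thn$ we get $\n \vDash \a \wto \b$, forcing $\langle \a, \b \rangle \in \r'$. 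For the right inclusion one shows contrapositively that if $\langle \a, \b \rangle \notin \r \cup \om$ then $\langle \a, \b \rangle \notin \r'$, using that $\langle \a, \b \rangle \notin \om$ makes the truth of $\a \wto \b$ sensitive to membership of the pair in the relation, together again with theoretical equivalence.

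The step I expect to require the most care is the right inclusion $\r' \subseteq \r \cup \om$ in the converse direction: here one cannot simply read off a truth value, but must exploit that a pair outside $\om$ is ``witnessed'' by the formula $\a \wto \b$ whose truth value is then determined by relation membership, so that disagreement of the relations on such a pair would yield a formula separating the theories, contradicting $\m \approx \n$. Since this lemma is quoted from \cite{ja}, I would present the argument compactly, emphasizing the role of $\om$ as exactly the set of pairs whose relation membership is truth-irrelevant, which is what makes the two conditions of membership in $\sm$ line up with theoretical equivalence.
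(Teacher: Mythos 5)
Your proposal is correct and takes essentially the same route as the paper: the paper cites this lemma from \cite{ja} without reproving it, but its proof of the syntactic analogue (Lemma \ref{lm: S lemma}, where in fact $\rthmin=\r\setminus\om$ and $\rthmax=\r\cup\om$) is exactly your argument --- induction on formula complexity for one direction, and for the other the observation that a pair outside $\om$ is ``witnessed'' by the formula $\a\wto\b$, whose truth value then tracks relation membership. One small caveat: twice you write ``$\m\vDash\a$ and $\m\nvDash\b$'' where you mean its negation, the truth of $\a\to\b$ (once as the case ``where relation membership matters'', once when unpacking $\langle\a,\b\rangle\notin\om$); these are evident slips immediately corrected by the inferences you actually draw, not gaps in the argument.
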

Now, we shall prove that for any model $\m=\model$, its $\SS$-set $\sm$ is always uncountable:
\begin{thm}\label{t: cardinality of an S set}
For any $\m=\model$ the cardinality of $\sm$ is $2^{\aleph_0}$.
\end{thm}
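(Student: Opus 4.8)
The plan is to reduce the cardinality computation to counting the subsets of the Omega set $\om$, and then to show that $\om$ is countably infinite. First I would observe that, by the very definition of $\sm$, a model $\nodel$ belongs to $\sm$ precisely when $v'=v$ and $\r'$ satisfies $\r\setminus\om\subseteq\r'\subseteq\r\cup\om$. Since the valuation is forced to equal $v$, the model is determined entirely by $\r'$, so $|\sm|$ equals the number of admissible relations $\r'$.

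Next I would set up an explicit bijection between those admissible relations and $\mathcal{P}(\om)$. Given any $S\subseteq\om$, put $\r'=(\r\setminus\om)\cup S$; this clearly lies between $\r\setminus\om$ and $\r\cup\om$. Conversely, any admissible $\r'$ can be recovered as $(\r\setminus\om)\cup(\r'\cap\om)$, because the two constraints force $\r'\setminus\om=\r\setminus\om$. Hence $S\mapsto(\r\setminus\om)\cup S$ is a bijection from $\mathcal{P}(\om)$ onto the set of admissible relations, and therefore $|\sm|=2^{|\om|}$.

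It then remains to pin down $|\om|$. Since $\om\subseteq\for^2$ and $\for$ is countable, we immediately get $|\om|\leq\aleph_0$. For the lower bound I would exhibit an explicit infinite subfamily. Recall that $\m\nvDash\a\to\b$ holds iff $\m\vDash\a$ and $\m\nvDash\b$. Now $\m\vDash\top$ always, while $\m\nvDash\neg^{2k+1}\top$ for every $k\geq 0$, since an odd number of negations flips the truth value of $\top$. Consequently each pair $\langle\top,\neg^{2k+1}\top\rangle$ lies in $\om$, and these pairs are pairwise distinct, so $\om$ is infinite and thus $|\om|=\aleph_0$. Combining with the previous step yields $|\sm|=2^{\aleph_0}$.

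I do not expect a genuine obstacle here; the argument is essentially bookkeeping. The only point requiring a little care is the bijection in the second step, specifically the verification that admissibility forces $\r'\setminus\om=\r\setminus\om$, so that distinct subsets of $\om$ really do correspond to distinct relations and hence to distinct models of $\sm$.
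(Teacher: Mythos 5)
Your proposal is correct and follows essentially the same route as the paper: both establish the bijection $X\mapsto\langle v,(\r\setminus\om)\cup X\rangle$ between $\mathcal{P}(\om)$ and $\sm$, and both show $\om$ is countably infinite by exhibiting an explicit family of pairs with true antecedent and false consequent (you use $\langle\top,\neg^{2k+1}\top\rangle$ where the paper uses $\langle\neg^{2n}\top,\bot\rangle$, an immaterial variation). If anything, your verification that admissibility forces $\r'\setminus\om=\r\setminus\om$ makes the surjectivity of the bijection more explicit than the paper does.
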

\begin{proof}
First note that for any $\m\in\M$, $\sm$ is equinumerous with omega's powerset: $\mathcal{P}(\om)$. The mapping $X\mapsto \langle v,(\r\setminus\om)\cup X \rangle$ is a bijection from $\mathcal{P}(\om)$ onto $\sm$. It is easy to observe that $\om$ is countably infinite. Define $\neg^0\top:=\top$ and $\neg^{n+1}\top:=\neg\neg^n\top$. For any natural number $n$, $\m\nvDash\neg^{2n}\top\to\bot$, so $\om$ is infinite. Our language is countable hence $\om$ is of cardinality $\aleph_0$ and thus the cardinality of $\mathcal{P}(\om)$ is $2^{\aleph_0}$. 
\end{proof}

Theorem \ref{t: cardinality of an S set} yields an immediate corollary: that each model has uncountably many theoretically equivalent models.
\section{The van Benthem characterization analogon}

In the context of expressive power over Kripke frames, the van Benthem characterization theorem \cite{benthemphd, blackburn} -- put in a popular manner -- states that  modal languages are the bisimulation invariant fragment of first-order logic. The notion of a standard translation plays a crucial role in proving this theorem. Standard translation maps modal formulas into first order formulas of the corresponding first order language in a natural way: it reflects the truth conditions of a given modal formula. Modal operators naturally correspond to quantifiers, so first order logic is the obvious target of the standard translation mapping. Note though, that in the case of generalised Epstein logic, such a mapping does not seem that obvious. There are no connectives which necessarily impose using quantifiers to express their truth conditions. Nonetheless, the use of atomic first order sentences comes quite naturally i.e. $St(\a\wto\b)=St(\a\to\b)\wedge\mathcal{R}(\a,\b)$, where $\mathcal{R}$ is the binary relational symbol corresponding to $\r$. However, an atomic first order sentence $\mathcal{R}(\a,\b)$ can be treated as an atomic expression of classical propositional logic. If we were to decode such a sentence by indexing it with a pair of related elements, it would have the following form: $p_{\langle\a,\b\rangle}$. Now we can define the corresponding classical propositional language. Let $\at$ be the set $\Phi\cup\Phi^\mathcal{R}$, where $\Phi^\mathcal{R}=\{p_{\langle\a,\b\rangle}:\a,\b\in\for\}$. One can easily see that $\Phi^\mathcal{R}$ is countable, so our extended language remains countable. The set of $\cpl$ formulas $\for^\ast$ is the set generated from $\at$ and connectives: $\neg,\wedge,\lor,\to,\leftrightarrow$. 
Each Epstein model $\m$ can be seen as a model of $\cpl$ defined on the set of formulas $\for^\ast$. We say that $\m\models_\cpl p_n$ iff $v(p_n)=1$ and $\m\models_\cpl p_{\langle\a,\b\rangle}$ iff $\langle\a,\b\rangle\in\r$. Definition can be extended to arbitrary formulas in a standard way. For $A\in\for^\ast$, we will write $\models_\cpl A$ to indicate the fact that for any $\m=\model$ we have $\m\models_\cpl A$. Now we are ready to define the standard translation for the language of Epstein logic. $St:\for\longrightarrow\for^\ast$. 
\begin{itemize}
\item $St(p_n)=p_n$, 
\item $St(\neg\a)=\neg St(\a)$, 
\item $St(\a\ast\b)=St(\a)\ast St(\b)$, for $\ast\in\{\lor,\wedge,\to,\leftrightarrow\}$. 
\item $St(\a\wto\b)=St(\a\to\b)\wedge p_{\langle\a,\b\rangle}$ 
\item $St(\a\wi\b)=St(\a\wedge\b)\wedge p_{\langle\a,\b\rangle}$.
\end{itemize}

Obviously, the following holds for any $\m=\model$: $\m\vDash\a$ iff $\m\models_\cpl St(\a)$. Now we can ask the following question: when is a formula of $\cpl$ equivalent to a standard translation of some Epstein formula? The answer is given by the characterization theorem. Let us first define the $\SS$-set invariance for $\cpl$ formulas.
\begin{definition}
We say that a formula $A\in\for^\ast$ is $\SS$-set invariant iff for any $\m=\model$, $\n=\nodel$, $\m\models_\cpl A$ and $\n\in\sm$ implies $\n\models_\cpl A$. 
\end{definition}
Now we can move on to the characterization theorem.
\begin{thm}
Let $A\in\for^\ast$. There is $\a\in\for$ such that $\models_\cpl A\leftrightarrow St(\a)$ iff $A$ is $\SS$-set invariant.
\end{thm}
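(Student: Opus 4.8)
The plan is to prove both implications, with the left-to-right direction being routine and the right-to-left direction carrying the real content. For the ($\Rightarrow$) direction, suppose $\models_\cpl A\leftrightarrow St(\a)$ for some $\a\in\for$. Given $\m$ with $\m\models_\cpl A$ and $\n\in\sm$, I would invoke Lemma \ref{f: S fact} to get $\n\approx\m$, hence $\tthm=\thn$. Since $\m\models_\cpl A$ gives $\m\models_\cpl St(\a)$, i.e. $\m\vDash\a$, theoretical equivalence yields $\n\vDash\a$, so $\n\models_\cpl St(\a)$ and therefore $\n\models_\cpl A$. This shows $A$ is $\SS$-set invariant.

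For the ($\Leftarrow$) direction, the first step is to restate invariance in a usable form. Using Lemma \ref{f: S fact} together with the basic equivalence $\m\vDash\a$ iff $\m\models_\cpl St(\a)$, two models lie in a common $\SS$-set exactly when they satisfy the same standard translations $St(\a)$. Because $\sm$ is an $\approx$-class and $\approx$ is symmetric, $\SS$-set invariance of $A$ is then equivalent to the statement that the $\cpl$-truth value of $A$ depends only on which members of $\Gamma:=\{St(\a):\a\in\for\}$ hold; in particular any two models agreeing on all of $\Gamma$ agree on $A$. The crucial structural remark is that $\Gamma$ is closed under boolean operations, since $St$ commutes with $\neg,\wedge,\lor,\to,\leftrightarrow$: every boolean combination of standard translations is again a standard translation.

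The heart of the argument is a compactness extraction. Let $\Sigma:=\{St(\a):A\models_\cpl St(\a)\}$. I would first show $\Sigma\models_\cpl A$. Given $\m\models_\cpl\Sigma$, consider the set consisting of $A$ together with every $St(\a)$ true in $\m$ and every $\neg St(\a)$ with $St(\a)$ false in $\m$. If some finite part of this set were inconsistent with $A$, then $A\models_\cpl\neg D$ for a conjunction $D$ of signed translations true in $\m$; but $\neg D$ is again some $St(\b)\in\Sigma$, forcing $\m\models_\cpl\neg D$ and contradicting $\m\models_\cpl D$. Hence by compactness of $\cpl$ the set is satisfiable by some $\n$, which satisfies $A$ and agrees with $\m$ on all of $\Gamma$; the reformulated invariance then gives $\m\models_\cpl A$. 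A second application of compactness to $\Sigma\models_\cpl A$ yields finitely many $St(\a_1),\dots,St(\a_k)\in\Sigma$ with $\bigwedge_i St(\a_i)\models_\cpl A$, while $A\models_\cpl\bigwedge_i St(\a_i)$ holds by definition of $\Sigma$. Setting $\a:=\a_1\wedge\cdots\wedge\a_k$ gives $\bigwedge_i St(\a_i)=St(\a)$ and $\models_\cpl A\leftrightarrow St(\a)$, as required.

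The main obstacle I anticipate is the construction of the agreeing model $\n$ via compactness: one must verify finite satisfiability precisely, and this is exactly where closure of $\Gamma$ under boolean operations is indispensable, since without it $\neg D$ would not be guaranteed to lie in $\Gamma$ and the contradiction would collapse. The reformulation of $\SS$-set invariance as agreement on $\Gamma$, together with the use of its symmetry, is the other point that must be handled carefully.
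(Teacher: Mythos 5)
Your proof is correct and takes essentially the same route as the paper's: your $\Sigma$ is the paper's $RC(A)$, and your compactness construction of a model satisfying $A$ while agreeing with $\m$ on all standard translations (hence theoretically equivalent, so that $\SS$-set invariance transfers $A$ back to $\m$) is exactly the paper's consistency argument for $T(\m)\cup\{A\}$, followed by the same final compactness extraction of a finite conjunction. The only cosmetic differences are that you add negated translations explicitly (redundant, since $St$ commutes with $\neg$, so these already lie among the positive translations true in $\m$) and that you absorb the paper's separate treatment of inconsistent $A$ into the vacuous case of $\Sigma\models_\cpl A$.
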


\begin{proof}
The left to right direction follows from the $\SS$-set Lemma \ref{lm: S lemma} and from the previously mentioned fact that for any $\m=\model$: $\m\vDash\a$ iff $\m\models_\cpl St(\a)$. For the right to left, assume that $A\in\for^\ast$ is $\SS$-set invariant. Let $RC(A)=\{B:B=St(\b)\,\,for\,\,some\,\,\b\in\for\,\,and\,\,A\models_\cpl B\}$. If $A$ is inconsistent, then $A$ is equivalent to $St(p_0\wedge\neg p_0)$. Assume then, that $A$ is consistent. This means that $RC(A)$ is consistent. Let $\m=\model$ be such that $\m\models_\cpl RC(A)$. Let $T(\m)=\{B:B=St(\b)\,\,for\,\,some\,\,\b\in\for\,\,and\,\,\m\models_\cpl B\}$. $T(\m)\cup\{A\}$ is consistent. If it was not, then $A\not\models_\cpl X$ for some $X=\{C_1,...,C_n\}\subseteq T(\m)$. This would mean that $\neg C_1\lor...\lor \neg C_n\in RC(A)$ and also $\neg C_1\lor...\lor \neg C_n\in T(\m)$ which leads to a contradiction. Let $\n=\nodel$ be such that $\n\models_\cpl T(\m)\cup\{A\}$. Since $\n\models_\cpl T(\m)$, we  know that $\n\vDash \th(\m)$, so $\th(\m)\subseteq \th(\n)$. Each theory of a model is maximally consistent, so we immediately can infer the opposite inclusion which gives $\n\approx\m$. This means that $\m\in \SS^\n$. We assumed that $A$ is $\SS$-set invariant so $\m\models_\cpl A$. This means that $RC(A)\models_\cpl A$. By compactness of $\cpl$, we obtain $\models_\cpl B_1\wedge...\wedge B_n\leftrightarrow A$, where $\{B_1,...,B_n\}\subseteq RC(A)$. Each $B_i=St(\b_i)$, so $\models_\cpl A\leftrightarrow St(\b_1\wedge...\wedge\b_n)$.
\end{proof}

\section{Epstein incompleteness}
It was a significant breakthrough in the theory of modal logic when examples of Kripke-incomplete logics (the ones with no adequate class of Kripke frames) were found \cite{Thomason, Fine, benthem}. This discoveries culminated in a shocking Blok's result \cite{blok} where uncountably many normal modal logics with no adequate classes of Kripke frames were proven to exist. We will prove an analogous theorem for Epstein relations. However, to make the reasoning clear, we shall proceed step by step. Since the phenomenon of Epstein-incompletess is terra incognita, we shall start with an example of continuum many logics which are Epstein complete. Then, we will proceed to give a single example of Epstein-incomplete logic (logic with no adequate set of Epstein relations). 
Then, we will combine the techniques used in previous two results to prove the main theorem of this section sating that there are uncountably many Epstein incomplete axiomatic extensions of $\mathcal{F}$.

Following the preliminary section, by a logic we understand a set of formulas containing $\mathcal{F}$, closed under uniform substitution and Modus Ponens. A logic $\lambda$ is Epstein complete iff there is a set $X\subseteq\mathcal{P}(\for^2)$ of Epstein relations such that $\lambda=\{\varphi: X\vDash\varphi\}$. Otherwise, a logic is said to be Epstein-incomplete.

Let us start with an Epstein-completeness result.
\begin{thm}\label{t: uncountably many complete extensions}
    There are $2^{\aleph_0}$ Epstein complete extensions of $\mathcal{F}$. 
\end{thm}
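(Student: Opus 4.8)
The plan is to prove the lower bound directly, since the upper bound is immediate: every logic is a subset of the countable set $\for$, so there are at most $2^{\aleph_0}$ logics at all. Thus it suffices to exhibit an injection $T\mapsto\lambda_T$ from $\mathcal{P}(\mathbb{N})$ into the Epstein complete extensions of $\rF$. The one genuine difficulty is the phenomenon already flagged in the preliminaries: a single relation need not have a substitution-closed theory (witness $\r=\{\langle p,p\rangle\}$ with $\r\vDash p\wto p$ but $\r\nvDash q\wto q$). So I cannot realise a logic as the theory of one relation; instead I will use whole \emph{families} of relations that are closed under the natural action of substitutions on relations, and check that such families do have substitution-closed theories.

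First I would record the substitution lemma. For every model $\m=\model$ and every $\sigma\in\subst$, writing $\sigma^{\dagger}\r:=\{\langle\a,\b\rangle:\langle\sigma\a,\sigma\b\rangle\in\r\}$ and $v'(p)=1$ iff $\m\vDash\sigma(p)$, one has $\langle v',\sigma^{\dagger}\r\rangle\vDash\c$ iff $\m\vDash\sigma\c$, for all $\c\in\for$. This is a routine induction on $\c$; the only interesting clauses are $\wto$ and $\wi$, and they go through precisely because membership in $\sigma^{\dagger}\r$ is \emph{defined} to mean $\langle\sigma\a,\sigma\b\rangle\in\r$. The payoff is: if a set $\R$ of relations is closed under every $\sigma^{\dagger}$, then $\{\c:\R\vDash\c\}$ is closed under substitution. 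Indeed, if $\R\vDash\c$, then for any $\r\in\R$, any $v$ and any $\sigma$, the lemma turns $\m\vDash\sigma\c$ into $\langle v',\sigma^{\dagger}\r\rangle\vDash\c$, which holds because $\sigma^{\dagger}\r\in\R$. Since moreover every relation validates $\rF$ (soundness, established in the preliminaries) and $\{\c:\R\vDash\c\}$ is trivially closed under MP, any $\sigma^{\dagger}$-closed $\R$ yields a logic $\{\c:\R\vDash\c\}$ extending $\rF$ that is Epstein complete by construction.

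Next I would build the family. Put $A_n:=\neg^{2n}\top$ and $O_n:=\{\langle\sigma A_n,\sigma A_n\rangle:\sigma\in\subst\}$; for $T\subseteq\mathbb{N}$ set $R_T:=\bigcup_{n\in T}O_n$, then $X_T:=\{\r:R_T\subseteq\r\}$, and finally $\lambda_T:=\{\c:X_T\vDash\c\}$. Each $O_n$, and hence $R_T$, is closed under applying a substitution to both coordinates, and from this a one-line computation gives that $X_T$ is $\sigma^{\dagger}$-closed: if $R_T\subseteq\r$ then $R_T\subseteq\sigma^{\dagger}\r$. By the previous paragraph, each $\lambda_T$ is therefore an Epstein complete logic extending $\rF$.

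It remains to see that $T\mapsto\lambda_T$ is injective, and the key combinatorial fact is the rigidity of the tower: for $m\neq n$, $A_m$ is \emph{not} a substitution instance of $A_n$, since an instance of $\neg^{2n}\top$ carries exactly $2n$ leading negations followed by a disjunction, whereas $A_m$ carries $2m$. I would use this to show $A_m\wto A_m\in\lambda_T$ iff $m\in T$. As $A_m\to A_m$ is valid, $\langle v,\r\rangle\vDash A_m\wto A_m$ reduces to $\langle A_m,A_m\rangle\in\r$. If $m\in T$, then $\langle A_m,A_m\rangle\in O_m\subseteq R_T\subseteq\r$ for every $\r\in X_T$, so $A_m\wto A_m\in\lambda_T$; if $m\notin T$, rigidity gives $\langle A_m,A_m\rangle\notin O_n$ for every $n\in T$, hence $\langle A_m,A_m\rangle\notin R_T$, and since $R_T\in X_T$ the model $\langle v,R_T\rangle$ refutes $A_m\wto A_m$, so $A_m\wto A_m\notin\lambda_T$. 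Distinct $T$ therefore give distinct $\lambda_T$, yielding $2^{\aleph_0}$ Epstein complete extensions, which with the upper bound gives exactly $2^{\aleph_0}$. I expect the main obstacle to be conceptual rather than computational: realising that completeness with respect to \emph{relations} (not models) forces substitution-stable families, and isolating the $\sigma^{\dagger}$-closure condition together with the substitution lemma; the rigidity of $\neg^{2n}\top$ is then exactly what makes the separating formulas independent.
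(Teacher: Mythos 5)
Your proposal is correct, but it is organised differently from the paper's proof, and the comparison is instructive. The paper proceeds syntactically: it defines the logics as axiomatic extensions $\rF K_T$ with $K_T=\{p\wto p^k:k\in T\}$ (where $p^{n+1}:=p\to p^n$), proves pairwise distinctness by exhibiting a countermodel $\langle v,\for^2\setminus\{\langle p,p^n\rangle\}\rangle$ that validates all substitution instances of $K_V$ while refuting $p\wto p^n$, and then must separately prove that each $\rF K_T$ is Epstein complete, which it does via a Lindenbaum/canonical-model argument showing that the relation built from a maximally consistent extension of $\rF K_T$ contains the core $\r_0^T=\{\langle\sigma(p),\sigma(p^k)\rangle:\sigma\in\subst,\,k\in T\}$. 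You instead define the logics semantically, as the theories $\lambda_T$ of the families $X_T=\{\r:R_T\subseteq\r\}$ -- note that these have exactly the same shape as the paper's $\R^T$, namely up-sets above a substitution-closed core of pairs -- so Epstein completeness holds by fiat, and the burden shifts to showing that $\lambda_T$ is a logic at all. Your $\sigma^{\dagger}$ pullback lemma carries that burden cleanly and is sound as stated (the composite $\sigma\circ\tau$ being a substitution is what makes $X_T$ closed under $\sigma^{\dagger}$), and your rigidity argument for $\neg^{2n}\top$ is correct: since $\top$ abbreviates $p_0\lor\neg p_0$, any instance $\sigma(\neg^{2n}\top)$ has exactly $2n$ leading negations before a disjunction, so $A_m\wto A_m$ separates $\lambda_T$ from $\lambda_{T'}$, with $R_T\in X_T$ itself supplying the refuting relation. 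What the paper's route buys is explicit axiomatic presentations of the continuum-many complete logics together with (strong) completeness theorems for them, in the spirit of its later sections; what your route buys is the elimination of the canonical-model machinery and a reusable general principle -- any family of relations closed under the $\sigma^{\dagger}$ action has a substitution-closed theory and hence yields an Epstein-complete logic -- which isolates exactly the obstacle flagged in the preliminaries, that a single relation need not have a substitution-closed theory. The one thing you do not get is a syntactic handle on $\lambda_T$ (you never claim, and do not need, that $\lambda_T=\rF\{A_n\wto A_n:n\in T\}$), whereas the paper's logics come with axioms attached.
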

\begin{proof}
    Let $p^0:=p$. Define $p^{n+1}:=p\to p^n$. Now let $K_\omega=\{p\wto p^n:n\in\omega\}$. Hence elements of $K_\omega$ are of the form $p\wto (p\to p)(=p\wto p^1)$, $p\wto (p\to(p\to p))(=p\wto p^2)$, and so on. Let $T\subseteq \omega$. Let $K_T=\{p\wto p^k:k\in T\}$. Hence $K_T\subseteq K_\omega$. To prove the theorem, we will show that for each two non-empty subsets of positive natural numbers $T,V\subseteq\omega\setminus\{0\}$ such that $T\neq V$ we have $\mathcal{F}K_T\neq\mathcal{F}K_V$. Let $T,V$ be such subsets. Without the loss of generality assume that $n\in T\setminus V$. Let $\m=\model$ be such that $\r=(\for\times\for)\setminus\{\langle p, p^n \rangle\}$. Let $0<k\neq n$. It is obvious that $\m\vDash \sigma(p^k)$ for any $\sigma\in\subst$, since $p^k$ is a tautology. Also $\langle \sigma(p),\sigma(p^k)\rangle\in \r$ since otherwise we would get $n=k$, contradiction. Thus $\m\vDash\sigma(p\wto p^k)$. Thus we have shown that $\m\vDash\{\sigma(p\wto p^k):k\neq n, \sigma\in\subst\}$ and thus obviously $\m\vDash \{\sigma(\psi):\psi\in K_V\}$, since $\{\sigma(\psi):\psi\in K_V\}\subseteq \{\sigma(p\wto p^k):k\neq n, \sigma\in\subst\}$. But also $\m\nvDash p\wto p^n$, so $\{\sigma(\psi):\psi\in K_V\}\nvDash p\wto p^n$ which means $\mathcal{F}K_V\neq\mathcal{F}K_T$.
    
    Now we need to show that each $\mathcal{F}K_T$ is Epstein-complete. Let $\r_0^T=\{\langle \sigma(p),\sigma(p^k)\rangle:\sigma\in\subst ,k\in T\}$. Define $\R^T:=\{\r \subseteq\for^2: \r_0^T\subseteq\r\}$. We claim that $\mathcal{F}K_T=\{\varphi\in\for:\R^T\vDash\varphi\}$. The ($\subseteq$) inclusion is a matter of a routine check. For the opposite inclusion let $\Sigma^+$ be maximally consistent set which includes $\mathcal{F}K_T$ as a subset. It is easy to observe that a model $\m=\model$ of $\Sigma^+$ defined in the same way as it is done at the end of section 2, is such that $\r_0^T\subseteq\r$, i.e. $\r\in\R^T$. This proves the opposite inclusion. 
\end{proof}
Thus, we already know that there is an abundance of logics which have adequate sets of Epstein relations.
Now we begin our exploration of incompleteness phenomenon. Let us start with a single example of Epstein-incomplete logic. Let $\alpha:= p\to(q\wto p)$.
\begin{lem}\label{l: single icomplete logic formula}
    For any Epstein relation $\r$ such that $\r\vDash\{\sigma(\alpha):\sigma\in\subst\}$ we also have $\r\vDash p \wto p$.
\end{lem}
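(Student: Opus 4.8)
The plan is to prove that whenever a relation $\r$ validates all substitution instances of $\alpha = p\to(q\wto p)$, it must also validate $p\wto p$. Recall that $\r\vDash\beta$ means $\model\vDash\beta$ for \emph{every} valuation $v$. So I must show: for every valuation $v$, $\model\vDash p\wto p$, i.e. $(v(p)=0$ or $v(p)=1)$ and $\langle p,p\rangle\in\r$. The propositional part is trivially true (it is just $p\to p$), so the real content is that the hypothesis forces $\langle p,p\rangle\in\r$.

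First I would unpack what the hypothesis $\r\vDash\{\sigma(\alpha):\sigma\in\subst\}$ gives us. For a suitable substitution $\sigma$ I want $\sigma(\alpha)$ to be of a form whose truth conditions pin down membership of $\langle p,p\rangle$ in $\r$. The natural move is to substitute so that the inner subformula $q\wto p$ becomes $p\wto p$; e.g. take $\sigma$ with $\sigma(q)=p$ and $\sigma(p)=p$, giving the instance $p\to(p\wto p)$. The truth condition for $\beta\wto\gamma$ requires $\langle\beta,\gamma\rangle\in\r$, so $\m\vDash p\wto p$ already demands $\langle p,p\rangle\in\r$; the trick is to force the antecedent $p$ to be true so that the conditional cannot be vacuously satisfied.

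The key step is the choice of valuation. Since $\r\vDash\sigma(\alpha)$ means the instance holds under \emph{all} valuations, I would pick the valuation $v$ with $v(p)=1$. Under this $v$, the formula $p\to(p\wto p)$ has a true antecedent, so its truth forces $\m\vDash p\wto p$, which in turn forces $\langle p,p\rangle\in\r$ by the semantic clause for $\wto$. This membership is a property of $\r$ alone and does not depend on the valuation, so it holds outright.

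Having established $\langle p,p\rangle\in\r$, I conclude by checking $p\wto p$ directly: for an arbitrary valuation $v'$, the clause for $\wto$ requires $(v'(p)=0$ or $v'(p)=1)$ — which always holds — together with $\langle p,p\rangle\in\r$, which we have just shown. Hence $\langle v',\r\rangle\vDash p\wto p$ for every $v'$, i.e. $\r\vDash p\wto p$. I do not expect a serious obstacle here; the only point requiring care is making sure the chosen substitution and valuation genuinely activate the $\wto$ clause (a false antecedent would let $\alpha$ be satisfied vacuously and tell us nothing about $\r$), so the crux is selecting $v(p)=1$ to defeat vacuous satisfaction.
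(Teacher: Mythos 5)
Your proof is correct and follows essentially the same route as the paper: the same substitution $\sigma(q)=p$ yielding the instance $p\to(p\wto p)$, and the same choice of a valuation with $v(p)=1$ to defeat vacuous satisfaction and force $\langle p,p\rangle\in\r$. The only difference is presentational --- the paper argues by contraposition (assuming $\langle p,p\rangle\notin\r$ and refuting the hypothesis) while you argue directly --- which does not change the substance.
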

\begin{proof}
    Assume that $\r\nvDash p\wto p$. Hence it must be the case that $\langle p,p\rangle\notin\r$. Take substitution $\sigma$ s.t. $\sigma(q)=p$ and for the remaining propositional letters $\varphi\neq q$, $\sigma(\varphi)=\varphi$. Then $\sigma(\alpha)=p\to(p\wto p)$. Take valuation $v$ s.t. for any propositional letter $\varphi$, $v(\varphi)=1$. Then $\langle v, \r\rangle\nvDash p\to(p\wto p)$, so $\r\nvDash p\to(p\wto p)$. Hence $\r\nvDash\{\sigma(\alpha):\sigma\in\subst\}$.
    \end{proof}
\begin{lem}\label{l: single incomplete logic does not contain}
    $p\wto p\notin\mathcal{F}\alpha$.
\end{lem}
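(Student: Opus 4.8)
The plan is to show that $p\wto p\notin\mathcal{F}\alpha$ by exhibiting a single model that validates every substitution instance of $\alpha$ but falsifies $p\wto p$. Since $\mathcal{F}\alpha=\{\sigma(\psi):\psi\in\{\alpha\}\cup\rF, \sigma\in\subst\}$-derivable formulas, and since $\vdash=\vDash$, it suffices to find a model $\m$ with $\m\vDash\{\sigma(\alpha):\sigma\in\subst\}$, $\m\vDash\rF$ (the latter being automatic, as every model validates $\rF$), and $\m\nvDash p\wto p$. By soundness this will witness that $p\wto p$ is not a semantic consequence of the instances of $\alpha$, hence not in $\mathcal{F}\alpha$.

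First I would take the obvious candidate relation $\r$ with $\langle p,p\rangle\notin\r$, so that $p\wto p$ fails outright on the valuation making $p$ true; concretely, pick $\r=(\for\times\for)\setminus\{\langle p,p\rangle\}$ together with a valuation $v$ (say $v(\varphi)=1$ for all $\varphi\in\Phi$). With this choice $\m\nvDash p\wto p$ is immediate, since the relational conjunct $\langle p,p\rangle\in\r$ of the truth condition for $\wto$ fails. The substance of the argument is then to verify $\m\vDash\sigma(\alpha)$ for every substitution $\sigma$. Here $\sigma(\alpha)=\sigma(p)\to(\sigma(q)\wto\sigma(p))$, and by the truth condition for $\to$ this holds unless $\m\vDash\sigma(p)$ while $\m\nvDash\sigma(q)\wto\sigma(p)$; the latter can only fail via the relational clause, i.e. when $\langle\sigma(q),\sigma(p)\rangle=\langle p,p\rangle$, the unique missing pair.

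So the key case to rule out is $\sigma(q)=\sigma(p)=p$: I must check that when both images equal $p$, the antecedent $\sigma(p)=p$ is already false in $\m$, making the implication vacuously true. But this is exactly where the naive choice $v(p)=1$ breaks down, so the valuation must instead satisfy $v(p)=0$. I would therefore set $v(p)=0$ and $v(\varphi)=1$ for the other letters. Then whenever $\sigma(p)=p$ we have $\m\nvDash\sigma(p)$, so $\sigma(\alpha)$ is vacuously true; and whenever $\sigma(p)\neq p$, the pair $\langle\sigma(q),\sigma(p)\rangle$ cannot equal $\langle p,p\rangle$ (its second coordinate differs from $p$), so it lies in $\r$ and the $\wto$-conjunct's relational clause is satisfied, while the $\to$-part of $\sigma(q)\wto\sigma(p)$ follows from $\sigma(p)\to(\sigma(q)\wto\sigma(p))$ being checked directly. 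Crucially, with $v(p)=0$ we still get $\m\nvDash p\wto p$, since $\langle p,p\rangle\notin\r$ regardless of the valuation.

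The main obstacle is precisely this tension in choosing the valuation: I need $v(p)=0$ so that every instance $\sigma(\alpha)$ with $\sigma(p)=p$ holds vacuously, yet I must simultaneously confirm that $v(p)=0$ does not interfere with falsifying $p\wto p$ (it does not, because the $\wto$ clause fails on the relational coordinate, independently of $v$). Once the valuation is fixed as $v(p)=0$, the remaining verification is the routine case split described above, after which soundness of $\mathcal{F}$ (the easy $\subseteq$ direction of $\vdash=\vDash$) delivers $p\wto p\notin\mathcal{F}\alpha$.
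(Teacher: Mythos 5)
Your proposal is correct and follows essentially the same route as the paper: both reduce the claim to $\{\sigma(\alpha):\sigma\in\subst\}\nvDash p\wto p$ and exhibit the countermodel with $\r=(\for\times\for)\setminus\{\langle p,p\rangle\}$, splitting on whether $\sigma(p)=p$ (vacuous by $v(p)=0$) or $\sigma(p)\neq p$ (relational clause automatic). The only difference is cosmetic: the paper sets $v(\varphi)=0$ for every letter, while you set $v(p)=0$ and $v(\varphi)=1$ elsewhere, which is immaterial since only $v(p)=0$ is used.
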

\begin{proof}
It is enough to show that $\{\sigma(\alpha):\sigma\in\subst\}\nvDash p\wto p$. Let $\m=\model$ where $\r=\for^2\setminus\{\langle p,p\rangle\}$ and $v(\varphi)=0$ for each propositional letter $\varphi$. Let $\psi\in\for$ be any formula s.t. $\psi\neq p$. Assume $\m\vDash\psi$. Hence also $\chi\to\psi$ for any $\chi\in\for$. Also $\langle \chi,\psi\rangle\in\r$, so $\m\vDash\psi\to(\chi\wto\psi)$. If, on the other hand, $\psi= p$, then $\m\nvDash p$, so also $\m\vDash p\to (p\wto p)$. This gives us $\m\vDash\{\sigma(\alpha):\sigma\in\subst\}$. But obviously $\m\nvDash p\wto p$, so $\{\sigma(\alpha):\sigma\in\subst\}\nvDash p\wto p$ which gives the result.
\end{proof}
\begin{thm}
    $\mathcal{F}\alpha$ is Epstein incomplete.
\end{thm}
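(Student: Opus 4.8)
The plan is to argue by contradiction, leaning entirely on the two preceding lemmas. Suppose, for contradiction, that $\mathcal{F}\alpha$ were Epstein complete. Then, by the definition of Epstein completeness, there is a set $X\subseteq\mathcal{P}(\for^2)$ of Epstein relations with $\mathcal{F}\alpha=\{\varphi:X\vDash\varphi\}$. First I would observe that every substitution instance of $\alpha$ lies in $\mathcal{F}\alpha$: since $\mathcal{F}\alpha$ is by construction closed under uniform substitution and contains $\alpha$, we have $\sigma(\alpha)\in\mathcal{F}\alpha$ for every $\sigma\in\subst$. Using the assumed equality, this yields $X\vDash\sigma(\alpha)$ for all $\sigma$, which unpacks, via the pointwise definition of $\vDash$ on a set of relations, to the following: for every $\r\in X$ and every $\sigma\in\subst$, $\r\vDash\sigma(\alpha)$. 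Equivalently, each individual $\r\in X$ satisfies $\r\vDash\{\sigma(\alpha):\sigma\in\subst\}$.

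Next I would apply Lemma \ref{l: single icomplete logic formula} relation by relation. Since each $\r\in X$ validates all substitution instances of $\alpha$, that lemma guarantees that each such $\r$ also validates $p\wto p$. Hence $\r\vDash p\wto p$ holds for every $\r\in X$, which is precisely the statement $X\vDash p\wto p$. By the assumed characterisation $\mathcal{F}\alpha=\{\varphi:X\vDash\varphi\}$, this forces $p\wto p\in\mathcal{F}\alpha$.

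Finally, this contradicts Lemma \ref{l: single incomplete logic does not contain}, which asserts $p\wto p\notin\mathcal{F}\alpha$. The contradiction shows that no such $X$ can exist, so $\mathcal{F}\alpha$ is Epstein incomplete. I do not expect any genuine obstacle here: the two lemmas have already isolated the essential tension, namely that every relation validating all instances of $\alpha$ is thereby forced to validate $p\wto p$, while the syntactic logic $\mathcal{F}\alpha$ does not prove $p\wto p$. The only point requiring a moment's care is the passage from ``$\mathcal{F}\alpha$ is complete for $X$'' to ``each $\r\in X$ validates every $\sigma(\alpha)$'', and this rests solely on the substitution-closure of the logic together with the pointwise definition of $X\vDash\varphi$.
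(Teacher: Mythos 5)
Your proof is correct and follows exactly the paper's argument: assume an adequate set of relations exists, use Lemma \ref{l: single icomplete logic formula} to force $p\wto p$ into $\mathcal{F}\alpha$, and contradict Lemma \ref{l: single incomplete logic does not contain}. The only difference is that you spell out the intermediate step (substitution-closure of $\mathcal{F}\alpha$ plus the pointwise definition of $X\vDash\varphi$ give $\r\vDash\{\sigma(\alpha):\sigma\in\subst\}$ for each $\r\in X$), which the paper leaves implicit.
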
\label{t: incomplete logic example}
\begin{proof}
    Assume that there is a set of relations $\R$ such that $\mathcal{F}\alpha=\{\varphi: \forall \r\in\R\,\,\, \r\vDash\varphi\}$. Then by Lemma \ref{l: single icomplete logic formula} $p\wto p\in \mathcal{F}\alpha$. But by Lemma \ref{l: single incomplete logic does not contain} $p\wto p\notin \mathcal{F}\alpha$. Contradiction, so $\mathcal{F}\alpha$ is Epstein incomplete.
\end{proof}

Now we are ready to extend our previous techniques to show that there are exactly uncountably many extensions of $\mathcal{F}$ which are Epstein incomplete.
First we construct a sequence of formulas. 
\begin{align*}
    p^0 &:= q\wto p\\
    p^{n+1} &:= p\to p^n
\end{align*}
We have $p^1=p\to(q\wto p)$ and $p_2$ is $p\to(p\to(q\wto p))$ etc. Now we define a set of formulas $\Lambda_\omega:=\{q\wto p^n:  n\in\omega\}$.
\begin{lem}\label{l: continuum incomplete R satisifes}
For any $n\in\omega\setminus\{0\}$, any Epstein relation $\r$, if $\r\vDash\{\sigma(q\wto p^n):\sigma\in\subst\}$, then $\r\vDash p\wto p$.
\end{lem}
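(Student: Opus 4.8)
The statement I want to prove is Lemma \ref{l: continuum incomplete R satisifes}: for any $n\in\omega\setminus\{0\}$ and any Epstein relation $\r$, if $\r\vDash\{\sigma(q\wto p^n):\sigma\in\subst\}$, then $\r\vDash p\wto p$. This is the continuum-many analogue of the single-formula Lemma \ref{l: single icomplete logic formula}, where the same conclusion was drawn from $\r\vDash\{\sigma(\alpha):\sigma\in\subst\}$ with $\alpha=p\to(q\wto p)$. The plan is to imitate that earlier proof directly: argue by contraposition, assume $\r\nvDash p\wto p$, deduce $\langle p,p\rangle\notin\r$, and then exhibit a single bad substitution instance of $q\wto p^n$ that $\r$ fails to force.

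**The core argument.** I would start by assuming $\r\nvDash p\wto p$, which (since $p\to p$ is a tautology true under every valuation) forces $\langle p,p\rangle\notin\r$. Now I need a substitution $\sigma$ and a valuation $v$ that falsify $q\wto p^n$ over $\r$, and the natural choice is to collapse $q$ onto $p$: take $\sigma(q)=p$ and $\sigma(\varphi)=\varphi$ for every other propositional letter. Under this substitution $p^0=q\wto p$ becomes $p\wto p$, and $p^n$ becomes $p\to p\to\cdots\to(p\wto p)$ ($n$ implication arrows stacked on top). So $\sigma(q\wto p^n)=p\wto\big(p\to\cdots\to(p\wto p)\big)$, and its outermost $\wto$ again refers to the pair $\langle p, \sigma(p^n)\rangle$ — but I must be careful that the \emph{inner} truth condition is what forces $\langle p,p\rangle$ to matter. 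The key observation is that under any valuation $v$ with $v(p)=1$, the truth of $\sigma(q\wto p^n)$ in $\langle v,\r\rangle$ requires, via the semantic clause for $\wto$ unwound through the implications, that the innermost subformula $p\wto p$ be true, which in turn requires $\langle p,p\rangle\in\r$. Since $\langle p,p\rangle\notin\r$, the innermost $p\wto p$ fails, and I would trace how this falsity propagates outward to make $\sigma(q\wto p^n)$ false as well. Hence $\r\nvDash\sigma(q\wto p^n)$, so $\r\nvDash\{\sigma(q\wto p^n):\sigma\in\subst\}$, giving the contrapositive.

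**The main obstacle.** The delicate point — and the reason the restriction $n\geq 1$ is imposed — is the propagation of the inner falsity through the chain of implications $p\to p^{n-1}$. An implication $\b\to\c$ is true in $\m$ when $\m\nvDash\b$, so stacking implications can \emph{mask} the failure of an inner subformula. I therefore need to choose the valuation so that every antecedent in the chain is true, forcing the truth value of the whole stack to equal that of the innermost conjunct-bearing formula. Setting $v(\varphi)=1$ for every propositional letter (as in the proof of Lemma \ref{l: single icomplete logic formula}) makes each $p$-antecedent true, so the nested implication $p\to(p\to\cdots\to(p\wto p))$ reduces in truth value to $p\wto p$, which is false because $\langle p,p\rangle\notin\r$. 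The outer $q\wto p^n$, i.e. $p\wto\sigma(p^n)$ after substitution, then demands both that the implicative core hold and that the relevant pair lie in $\r$; since the core already fails under this valuation, the whole formula is false. The one genuine subtlety is bookkeeping: verifying that with all letters set true, the stacked implications genuinely transmit the innermost falsehood rather than being vacuously satisfied — this is exactly where $n\geq 1$ is needed, since for $n=0$ there is no implication layer and the formula is simply $p\wto p$ itself, trivializing the claim.
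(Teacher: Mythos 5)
Your proof is correct and is essentially the paper's own argument: the same substitution $\sigma(q)=p$, the same valuation making $p$ true, and the same propagation of the falsity of the innermost $p\wto p$ outward through the stacked implications to falsify $\sigma(q\wto p^n)$; the paper merely packages this as a reductio inside a direct proof (assuming $\r\vDash\sigma(q\wto p^n)$ and $\langle p,p\rangle\notin\r$ to reach a contradiction) where you argue by contraposition. One small inaccuracy in your closing aside, which does not affect the stated lemma: for $n=0$ the instance $\sigma(q\wto p^0)$ is $p\wto(p\wto p)$, not $p\wto p$, and in fact your argument would go through for $n=0$ as well.
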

\begin{proof}
    Let $\sigma$ be a substitution such that $\sigma(q)=p$ and for the rest of propositional letters $\sigma(\varphi)=\varphi$. Hence $\sigma(q\wto p^n)$ is of the form $p\wto \underbrace{(p\to(p\to...}_{\text{n times}}(p\wto p)...)$. Let $\r\vDash\sigma(q\wto p^n)$.  Obviously, it must be the case that $\langle p, \sigma(p^n)\rangle\in \r$. Assume that $\langle p,p\rangle\notin \r$. Let $v(p)=1$. Obviously $\model\nvDash p\wto p$, hence $\model\nvDash p\to (p\wto p)$, and $\model\nvDash p\to (p\to (p\wto p))$, and so on. Thus $\model\nvDash \sigma(p^n)$ for any $n\in\omega$. Hence also $\model\nvDash p\wto \sigma(p^n)$ for any $n\in\omega$ which gives us contradiction. Hence $\langle p,p\rangle\in\r$ and for this reason $\r\vDash p\wto p$.
\end{proof}
\begin{lem}\label{l: continuum incomplete formula notin}
Let $S$ be a non-empty subset of $\omega$. Then $\{\sigma(q\wto p^n):n\in S, \sigma\in\subst\}\nvDash p\wto p$. 
\end{lem}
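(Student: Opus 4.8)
The plan is to refute the entailment by exhibiting a single model $\m=\model$ that validates every substitution instance of $q\wto p^n$ with $n\in S$ yet falsifies $p\wto p$. Mirroring the construction in Lemma~\ref{l: single incomplete logic does not contain}, I would take $v(\varphi)=0$ for every propositional letter $\varphi$ and $\r=\for^2\setminus\{\langle p,p\rangle\}$. Then $\m\nvDash p\wto p$ holds immediately, because $\langle p,p\rangle\notin\r$ while $p\to p$ is a tautology, so the relatedness clause of $\wto$ fails. Everything else reduces to checking that $\m\vDash\sigma(q\wto p^n)$ for all $\sigma\in\subst$ and all $n\in S$.

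Fix such a $\sigma$ and $n$, and write $\beta=\sigma(q)$, $\gamma=\sigma(p)$, so that $\sigma(q\wto p^n)=\beta\wto\sigma(p^n)$, where $\sigma(p^n)=\gamma\to\sigma(p^{n-1})$ unfolds to $n$ nested implications with antecedent $\gamma$ and innermost formula $\sigma(p^0)=\beta\wto\gamma$. The relatedness clause is free: since $n\geq 1$ the main connective of $\sigma(p^n)$ is $\to$, so $\sigma(p^n)\neq p$ and hence $\langle\beta,\sigma(p^n)\rangle\neq\langle p,p\rangle$ lies in $\r$. For the implicational clause, if $\m\nvDash\beta$ we are done; so assume $\m\vDash\beta$ and show $\m\vDash\sigma(p^n)$. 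I would peel off the leading implications: if $\m\nvDash\gamma$ then $\gamma\to\sigma(p^{n-1})$ is true vacuously; if $\m\vDash\gamma$ then, repeating down the chain, $\m\vDash\sigma(p^n)$ reduces to $\m\vDash\beta\wto\gamma$.

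The crux is the final step. Assuming $\m\vDash\gamma$, the truth of $\gamma$ forces $\gamma$ not to be a propositional letter---in particular $\gamma\neq p$, since every letter is false under $v$. Consequently $\langle\beta,\gamma\rangle\neq\langle p,p\rangle$ and so $\langle\beta,\gamma\rangle\in\r$; combined with $\m\vDash\gamma$ this yields $\m\vDash\beta\wto\gamma$, and thus $\m\vDash\sigma(p^n)$, completing all cases. Hence $\m\vDash\{\sigma(q\wto p^n):n\in S,\sigma\in\subst\}$ while $\m\nvDash p\wto p$, which gives the claim. I expect the main obstacle to be precisely this interaction between the relatedness condition and the valuation: the one pair $\langle p,p\rangle$ omitted from $\r$ can never be ``hit'' when verifying relatedness, because the only situation in which we are forced to test relatedness of a pair $\langle\beta,\gamma\rangle$ already guarantees $\m\vDash\gamma$, hence $\gamma\neq p$. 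The presence of at least one leading implication (i.e. $n\geq 1$, matching the hypothesis of Lemma~\ref{l: continuum incomplete R satisifes}) is what supplies both the free relatedness clause and the vacuous escape used in the reduction.
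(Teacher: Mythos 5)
Your proof is correct and is essentially the paper's own argument: the same countermodel $\r=\for^2\setminus\{\langle p,p\rangle\}$ with a falsifying valuation, checked instance by instance. Your write-up merely streamlines the paper's case split on $\sigma(q)=p$ versus $\sigma(q)\neq p$: you get the outer relatedness clause for free by noting that for $n\geq 1$ the consequent $\sigma(p^n)$ has $\to$ as main connective and hence is not $p$, and you dispose of the inner clause by noting that under the all-false valuation $\m\vDash\gamma$ already forces $\gamma\neq p$ (for this last step $v(p)=0$ alone would suffice, which is all the paper assumes).

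One point is worth flagging, and it counts against the statement rather than against you: your restriction to $n\geq 1$ is not optional but necessary. For $n=0$ the lemma is in fact false: taking $\sigma(q)=\top$, $\sigma(p)=\bot$ yields the instance $\top\wto(\top\wto\bot)$, which no model satisfies (since $\m\vDash\top$ always, its implicational clause demands $\m\vDash\top\wto\bot$ and hence $\m\vDash\bot$), so whenever $0\in S$ the premise set is unsatisfiable and the entailment $\{\sigma(q\wto p^n):n\in S,\sigma\in\subst\}\vDash p\wto p$ holds vacuously. The paper's own proof tacitly makes the same restriction --- its step concluding $\m\vDash\psi\wto\varphi^n$ "for any $n\in\omega$" breaks down at $n=0$ (take $\psi=\top$, $\varphi=p$) --- and the lemma is only ever invoked with $S\subseteq\omega\setminus\{0\}$, as in the definition of $\Lambda_T$ and in the final incompleteness theorem. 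So your proof covers exactly the cases in which the lemma is true and actually used.
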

\begin{proof}
    Let $\m=\model$ be such that $\r=\for\times\for\setminus\{\langle p,p\rangle\}$ and $v(p)=0$. Let $\psi\in\for$ and $\psi\neq p$. For any $\varphi\in\for$, we have $\model\vDash \varphi\to (\psi\wto\varphi)$. To see that assume $\model\vDash\varphi$. Hence also $\model\vDash\psi\to\varphi$ and since $\langle\psi,\varphi\rangle\in\r$, we get $\model\vDash\psi\wto\varphi$. Straightforward inductive step shows that we get for any $n\in\omega$, $\model\vDash\varphi\to\varphi^n$ and since $\langle \psi,\varphi^n\rangle\in\r$ it is also the case that $\model\vDash\psi\wto\varphi^n$. Now let $\psi = p$. If $\varphi\neq p$, the reasoning from the previous case applies since then $\langle p, \varphi\rangle\in \r$. Now let $\varphi=p$. Then for each formula of the form $p\to(p\to...(p\wto p)...)$ we have $\m\vDash p\to(p\to...(p\wto p)...)$ since $\model\nvDash p$. Observe that $p$ is not a formula of the above form -- see definition of $\varphi^n$. Thus we also have $\langle p, p\to(p\to...(p\wto p)...)\rangle\in\r$ which means $\m\vDash p\wto (p\to(p\to...(p\wto p)...)$. Thus we get $\m\vDash\{\sigma(p\wto p^n):n\in \omega, \sigma\in\subst\}$ and since $S\subseteq\omega$ also $\m\vDash\{\sigma(p\wto p^n):n\in S, \sigma\in\subst\}$. But also $\m\nvDash p\wto p$ since $\langle p,p\rangle\notin\r$ which proves the lemma.
\end{proof}
For our final lemma, define the set $\Lambda_T:=\{q\wto p^m:m\in T\}$, for each $T\subseteq\omega\setminus\{0\}$. Obviously, we have $\Lambda_T\subseteq \Lambda_\omega$.
\begin{lem}\label{l:continuum incomplete each different}
    Let $P$ and $T$ be a non-empty subsets of $\omega\setminus\{0\}$ such that $P\neq T$. Then $\mathcal{F}\Lambda_P\neq\mathcal{F}\Lambda_T$. 
\end{lem}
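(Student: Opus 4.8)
The plan is to mirror the strategy of Theorem \ref{t: uncountably many complete extensions}. Since $P\neq T$, by symmetry I may assume there is some $n\in P\setminus T$, and I will show that the generator $q\wto p^n$ belongs to $\mathcal{F}\Lambda_P$ but not to $\mathcal{F}\Lambda_T$, which immediately gives $\mathcal{F}\Lambda_P\neq\mathcal{F}\Lambda_T$. The easy half is that $q\wto p^n\in\Lambda_P\subseteq\mathcal{F}\Lambda_P$ because $n\in P$. For the hard half I would use the fact from the preliminaries that $\vdash\;=\;\vDash$ together with the characterization $\varphi\in\mathcal{F}\Lambda_T$ iff $\{\sigma(\psi):\psi\in\Lambda_T,\ \sigma\in\subst\}\vdash\varphi$; thus it suffices to exhibit a single model $\m$ with $\m\vDash\sigma(q\wto p^m)$ for every $m\in T$ and every $\sigma$, while $\m\nvDash q\wto p^n$.

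The model I would take is $\m=\model$ with $\r=\for^2\setminus\{\langle q,p^n\rangle\}$ and $v(\varphi)=0$ for every propositional letter $\varphi$, in the spirit of the models used in Lemmas \ref{l: single incomplete logic does not contain} and \ref{l: continuum incomplete formula notin}. Falsifying the target is immediate: since $\langle q,p^n\rangle\notin\r$, the relational clause in the truth condition for $\wto$ fails, so $\m\nvDash q\wto p^n$.

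The substantive work is verifying $\m\vDash\sigma(q\wto p^m)$ for all $m\in T$. Writing $\psi=\sigma(q)$ and $\Xi_m=\sigma(p^m)$, the truth condition requires both $\langle\psi,\Xi_m\rangle\in\r$ and ($\m\nvDash\psi$ or $\m\vDash\Xi_m$). For the relational part I would prove, using unique readability in the absolutely free formula algebra, that $\sigma(q)=q$ together with $\sigma(p^m)=p^n$ forces $m=n$: peeling the nested implications $p^m=p\to p^{m-1}$ and comparing top connectives (the base $q\wto p$ being a $\wto$-formula, versus $\to$) yields a contradiction whenever $m\neq n$. Since $m\in T$ and $n\notin T$ give $m\neq n$, the pair $\langle\psi,\Xi_m\rangle$ never equals $\langle q,p^n\rangle$, so it lies in $\r$. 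For the boolean part I would split on whether $\m\vDash\psi$: if $\m\nvDash\psi$ the required disjunct is met outright; if $\m\vDash\psi$, then since every atom is false under $v$ the formula $\psi$ cannot be the letter $q$, and I further observe that if $\sigma(p)$ is false then $\Xi_m=\sigma(p)\to\Xi_{m-1}$ holds vacuously, whereas if $\sigma(p)$ is true then peeling the true antecedents reduces $\m\vDash\Xi_m$ to $\m\vDash\sigma(q)\wto\sigma(p)$, whose relational clause holds because $\psi\neq q$ keeps $\langle\sigma(q),\sigma(p)\rangle$ off the deleted pair and whose boolean clause holds because $\sigma(p)$ is true.

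I expect the main obstacle to be exactly this boolean bookkeeping, which is genuinely harder than in Theorem \ref{t: uncountably many complete extensions}: there the formulas $p^k$ were classical tautologies, so the boolean clause of each $\sigma(p\wto p^k)$ held automatically and only the relational clause needed care. Here $p^m$ bottoms out in $q\wto p$ and is not a tautology, and the adversarial substitution $\sigma(p)=p^n,\ \sigma(q)=q$ is designed to steer $\sigma(p^m)$ toward the deleted pair. The device that defuses it is choosing $v$ to falsify every atom — in particular $q$ — so that the outer $\wto$ is discharged through its $\m\nvDash\sigma(q)$ disjunct precisely in the dangerous case. Once these two clauses are checked, the model witnesses $\{\sigma(\psi):\psi\in\Lambda_T,\ \sigma\in\subst\}\nvDash q\wto p^n$, and hence $q\wto p^n\notin\mathcal{F}\Lambda_T$, completing the argument.
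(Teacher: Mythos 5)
Your proposal is correct and takes essentially the same approach as the paper: the identical witnessing model $\langle v,\r\rangle$ with $\r=\for^2\setminus\{\langle q,p^n\rangle\}$ and all atoms false, used to show $q\wto p^n\in\mathcal{F}\Lambda_P\setminus\mathcal{F}\Lambda_T$. The only differences are cosmetic --- you organize the cases by whether $\m\vDash\sigma(q)$ rather than by whether $\sigma(q)=q$, and you spell out the unique-readability argument for $\sigma(p^m)\neq p^n$ that the paper dismisses as obvious.
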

\begin{proof}
    Without the loss of generality, assume that there is $k\in P\setminus T$. Let $\m=\model$ be such that $\r=\for\times\for\setminus\{\langle q, p^k\rangle\}$ and for any propositional letter $\varphi$, $v(\varphi)=0$. Observe that for any $n\neq k$, any $\sigma\in\subst$, $\m\vDash \sigma(q\wto p^n)$. To see that, let $0\neq n\neq k$ and $\sigma\in\subst$. We will consider two cases. 

    If $\sigma(q)=q$, then $\m\nvDash \sigma(q)$ and so $\m\vDash \sigma(q\to p^n)$. It is obvious that $\sigma(p^n)\neq(p^k)$ so $\langle \sigma(q),\sigma(p^n)\rangle\in\r$ and thus $\m\vDash \sigma(q\wto p^n)$.

    Let $\sigma(q)\neq q$. Then $\langle\sigma(q),\sigma(p)\rangle\in\r$ and by a straightforward induction we get $\m\vDash\sigma(p^n)$. Also $\m\vDash\sigma(q)\to\sigma(p^n)$ and $\langle \sigma(q),\sigma(p^n)\rangle\in\r$, so $\m\vDash\sigma(q)\wto\sigma(p^n)$.

    We have shown that $\m\vDash\sigma(q\wto p^n)$ for each $1\leq n\neq k$ and thus also $\m\vDash\{\sigma(\varphi):\varphi \in \Lambda_T, \sigma\in\subst\}$. Obviously $\m\nvDash q\wto p^k$, which means that $q\wto p^k\notin \mathcal{F}\Lambda_T$. We conclude that $\mathcal{F}\Lambda_S\neq\mathcal{F}\Lambda_T$.
\end{proof}

Now we can prove the theorem.
\begin{thm}
    There are $2^{\aleph_0}$ Epstein incomplete extensions of $\mathcal{F}$.
\end{thm}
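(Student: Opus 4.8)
The plan is to exhibit an injective family of logics indexed by the non-empty subsets of $\omega\setminus\{0\}$, each of which is Epstein incomplete, and then to observe that there are $2^{\aleph_0}$ such subsets while the total number of logics is bounded above by $2^{\aleph_0}$ (since $\for$ is countable, every logic is a subset of a countable set). This pins the count down to exactly $2^{\aleph_0}$.

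First I would take, for each non-empty $T\subseteq\omega\setminus\{0\}$, the logic $\mathcal{F}\Lambda_T$. Lemma~\ref{l:continuum incomplete each different} already shows that $P\neq T$ implies $\mathcal{F}\Lambda_P\neq\mathcal{F}\Lambda_T$, so the assignment $T\mapsto\mathcal{F}\Lambda_T$ is injective and the family it produces has cardinality $2^{\aleph_0}$. It thus remains only to verify that every member of this family is Epstein incomplete.

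The core of the argument is to show each $\mathcal{F}\Lambda_T$ is Epstein incomplete, which I would carry out exactly as in the single-formula case (Theorem~\ref{t: incomplete logic example}). Suppose toward a contradiction that $\mathcal{F}\Lambda_T=\{\varphi:X\vDash\varphi\}$ for some set $X$ of Epstein relations. Since $T$ is non-empty, fix some $n\in T$; because $n\in\omega\setminus\{0\}$ and $\Lambda_T$ is contained in the substitution-closed logic $\mathcal{F}\Lambda_T$, every instance $\sigma(q\wto p^n)$ lies in $\mathcal{F}\Lambda_T$ and hence is validated by every $\r\in X$. Thus each $\r\in X$ satisfies $\r\vDash\{\sigma(q\wto p^n):\sigma\in\subst\}$, and Lemma~\ref{l: continuum incomplete R satisifes} yields $\r\vDash p\wto p$. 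Consequently $X\vDash p\wto p$, i.e.\ $p\wto p\in\mathcal{F}\Lambda_T$. On the other hand, applying Lemma~\ref{l: continuum incomplete formula notin} with $S=T$ gives $\{\sigma(q\wto p^n):n\in T,\sigma\in\subst\}\nvDash p\wto p$, which, through the identification $\vdash\,=\,\vDash$ and the characterisation $\varphi\in\mathcal{F}\Lambda\iff\{\sigma(\psi):\psi\in\Lambda,\sigma\in\subst\}\vDash\varphi$ from the preliminaries, means $p\wto p\notin\mathcal{F}\Lambda_T$. This contradiction shows that $\mathcal{F}\Lambda_T$ is Epstein incomplete.

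I expect the only real obstacle to be bookkeeping rather than a new idea: one must check carefully that membership in the purported Epstein-complete logic $\{\varphi:X\vDash\varphi\}$ really does force every $\r\in X$ to validate \emph{all} substitution instances of $q\wto p^n$ (which is exactly the hypothesis Lemma~\ref{l: continuum incomplete R satisifes} requires), and that Lemma~\ref{l: continuum incomplete formula notin} delivers the non-membership of $p\wto p$ via the syntactic--semantic matching. Combining the injective family of size $2^{\aleph_0}$ with the trivial upper bound then finishes the proof.
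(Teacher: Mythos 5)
Your proposal is correct and follows essentially the same route as the paper: it combines Lemma~\ref{l: continuum incomplete formula notin} (non-membership of $p\wto p$), Lemma~\ref{l: continuum incomplete R satisifes} (every adequate relation forces $p\wto p$), and Lemma~\ref{l:continuum incomplete each different} (injectivity of $T\mapsto\mathcal{F}\Lambda_T$) in exactly the way the paper does. Your additions --- spelling out why substitution-closure of $\mathcal{F}\Lambda_T$ makes every $\r\in X$ validate all instances of $q\wto p^n$, and noting the trivial upper bound of $2^{\aleph_0}$ on the number of logics --- are sound elaborations of steps the paper leaves implicit.
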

\begin{proof}
    For any non-empty $S\subseteq\omega\setminus\{0\}$, we have $p\wto p\notin \mathcal{F}\Lambda_S$ by Lemma \ref{l: continuum incomplete formula notin}. Also for each Epstein relation $\r$ s.t. $\r\vDash\mathcal{F}\Lambda_S$ we have $\r\vDash p\wto p$ by Lemma \ref{l: continuum incomplete R satisifes}, so $\mathcal{F}\Lambda_S$ is Epstein incomplete for each $\emptyset\neq S\subseteq\omega\setminus\{0\}$. By Lemma \ref{l:continuum incomplete each different} we have uncountably many such logics.
\end{proof}

\section{The two conditions: undefinability and strong completeness}
In order to show the power of our methods, we choose to analyze two exemplary conditions: symmetry and the one that may be called predecessor -- negation elimination. Both of them, among some others, are imposed by Epstein on the models of his relatedness logics \cite{epstein90}. 
\begin{itemize}
\item[(s)]: Symmetry: for any $\a,\b$: if $\a\r\b$, then $\b\r\a$,
\item[(n)]: for any $\a,\b$: if $\neg\a\r\b$ then $\a\r\b$.
\end{itemize}

Observe that both of these conditions are undefinable. We will extend the technique already signalised in \cite{ja} to prove their undefinability. In each case we will show that neither the set of Epstein relations fulfilling the respective condition is definable nor is the set of Epstein models meeting the condition.

\begin{definition}
Let $\K$ be some set of Epstein relations, $\Gamma\subseteq\for$. We say that $\Gamma$ defines $\K$ when the following holds: for any Epstein relation $\r$, $\r\vDash\Gamma$ iff $\r\in\K$. We say that the set of relations $\K$ is definable iff there is $\Gamma\subseteq\for$ such that $\Gamma$ defines $\K$. Analogously, we define the definability of a given set of Epstein models.
\end{definition}




\begin{prop}\label{f: relation undefinability}
Let $\R^s$ be the set of all symmetric relations, $\R^n$ the set of all relations fulfilling $n$ condition. Both $\R^s$ and $\R^n$ are undefinable.
\end{prop}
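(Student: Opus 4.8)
The plan is to prove undefinability of each set by exhibiting a relation $\r$ that fails the structural condition (symmetry or $(n)$) yet validates exactly the same formulas as some relation that does satisfy it, so that no set $\Gamma$ of formulas can separate them. The key idea is that the semantic clauses for $\wto$ and $\wi$ only ever \emph{consult} membership $\langle\a,\b\rangle\in\r$ positively; there is no connective whose truth demands that a pair be \emph{absent} from $\r$. Consequently, enlarging a relation can only add validities in a controlled way, and in fact the full relation $\for\times\for$ validates the largest possible theory. My strategy is therefore to compare an arbitrary relation against relations obtained by closing it (or against $\for\times\for$) and to show the theories coincide, while the closure operation can be chosen to break the structural property.

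For symmetry I would argue as follows. First observe, using Lemma~\ref{f: S fact} together with the truth clauses, that the theory $\r\vDash\a$ depends on $\r$ only through which pairs $\langle\b,\c\rangle$ are forced to lie in $\r$ by subformulas of the shape $\b\wto\c$ or $\b\wi\c$; and crucially that $\for\times\for\vDash\a$ iff $\a$ holds on every model whose relation is total. The concrete step: take $\r=\{\langle p,q\rangle\}$ with $p\neq q$, which is not symmetric. I would show that $\r$ and its symmetric closure $\r'=\{\langle p,q\rangle,\langle q,p\rangle\}$ validate different formulas is \emph{not} what I want --- instead I want two relations, one symmetric and one not, with the \emph{same} theory. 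The cleanest witness is to take a non-symmetric $\r$ and a symmetric $\r^{\mathsf{sym}}$ that are $\approx$-indistinguishable at the level of $\r\vDash$. Since $\r\vDash\a$ iff for all $v$ we have $\langle v,\r\rangle\vDash\a$, and since for a propositional letter both $\r\nvDash p$ and $\r\nvDash\neg p$ (as noted in the preliminaries), the only formulas a relation can validate are essentially the ``relatedness-insensitive'' tautologies plus those $\wto/\wi$-consequences forced by membership. I would pick $\r=\for\times\for$ minus a single asymmetric omission that is never witnessed by a validity, contrasting it with $\for\times\for$ itself.

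The technically decisive step is this invariance claim: \emph{for any relation $\r$ and any formula $\a$, $\r\vDash\a$ iff $\for\times\for\vDash\a$ whenever the omitted pairs of $\r$ can never be forced.} I expect the main obstacle to be pinning down exactly which pairs can be ``safely'' omitted --- that is, proving a lemma of the form: if $\langle\b,\c\rangle\in\r'\setminus\r$ and no substitution instance of a subformula of any $\a\in\Gamma$ forces $\langle\b,\c\rangle$, then $\r$ and $\r'$ agree on $\Gamma$. The subtlety is that $\Gamma$ may be infinite and that substitution generates infinitely many relatedness demands, so I must ensure the chosen omitted pair lies outside \emph{all} such demands simultaneously. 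I would handle this by choosing the omitted pair to involve formulas of a syntactic shape (e.g.\ specific nesting depth or a fresh structure) that cannot coincide with any required pair, exactly as in the incompleteness lemmas above where $\langle p,p\rangle$ or $\langle q,p^k\rangle$ was singled out.

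Finally, to conclude undefinability of the \emph{model} sets (not just the relation sets), I would lift each relation-level counterexample to the model level: if $\Gamma$ defined the set of symmetric (resp.\ $(n)$-) models, then since $\r\vDash\Gamma$ means $\langle v,\r\rangle\vDash\Gamma$ for all $v$, a relation-level failure of definability would immediately yield a model $\langle v,\r\rangle$ satisfying $\Gamma$ whose relation is not symmetric (resp.\ violates $(n)$), contradicting definability. Thus both halves follow from the single invariance lemma applied to each condition, with the $(n)$ case handled by omitting a pair $\langle\neg\b,\c\rangle$ while keeping $\langle\b,\c\rangle$, chosen again so that the omission is never forced.
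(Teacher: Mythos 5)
There is a genuine gap, and it sits exactly where you flag your ``main obstacle.'' Your engine is a syntactic-freshness criterion: pick an omitted pair that no subformula of any member of $\Gamma$ ``forces,'' chosen by nesting depth or fresh shape as in the incompleteness lemmas. But here $\Gamma$ is an arbitrary, possibly infinite set, and it can mention every pair: for instance $\Gamma=\{(\a\wto\b)\to((\b\wto\a)\lor\neg(\b\to\a)):\a,\b\in\for\}$ is valid on every symmetric relation and has every pair $\langle\a,\b\rangle\in\for^2$ occurring as the pair of a $\wto$-subformula, so no pair is syntactically safe and your selection step has nothing to select. (The incompleteness lemmas you invoke are disanalogous: there the substitution-closed set was generated by a single formula of known shape, which made the syntactic analysis feasible; also, substitution instances are irrelevant here, since $\r\vDash\Gamma$ quantifies over valuations only and relation theories are not substitution-closed.) What is needed, and what your proposal never identifies, is a \emph{semantic}, $\Gamma$-independent invisibility criterion: a pair $\langle\a,\b\rangle$ such that $\a\to\b$ is unsatisfiable, paradigmatically $\langle\top,\bot\rangle$. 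Since $\m\nvDash\top\to\bot$ for every model $\m$, this pair lies in $\Omega^{\m}$ for every $\m$, so by Lemma \ref{f: S fact} it can be added to or removed from any relation without changing the theory of any model at any valuation. This is precisely the paper's proof: it adds $\langle\top,\bot\rangle$ to the symmetric relation $\{\langle p,q\rangle,\langle q,p\rangle\}$ to break symmetry while preserving $\r\vDash\Gamma$, and adds $\langle\neg\bot,\bot\rangle$ (with $\langle\bot,\bot\rangle$ absent) for the $(n)$ case. Your near-witness ``$\for\times\for$ minus a single asymmetric omission'' does work once the omission is taken to be $\langle\top,\bot\rangle$, but as written the choice of omission is left to a freshness argument that fails for adversarial $\Gamma$.

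Two of your guiding claims are also false and would sink the stated invariance lemma even if the selection problem were solved. Membership \emph{is} consulted negatively, via negation: $\r=\emptyset$ validates $\neg(\top\wto\top)$ while $\for\times\for$ does not, so enlarging a relation can destroy validities, theories of relations are not monotone in $\r$, and $\for\times\for$ does not validate the largest possible theory. Likewise a set $\Gamma$ can force the \emph{absence} of a pair: if $\r\vDash\neg(\b\wto\c)$ and $\b\to\c$ is satisfiable, then $\langle\b,\c\rangle\notin\r$. The unsatisfiable-implication trick repairs both defects at once, because a pair whose implication fails in every model is invisible to positive and negative occurrences alike. Finally, note that your closing paragraph about the model sets $\M^s$, $\M^n$ addresses the paper's subsequent corollary rather than this proposition; the paper disposes of it by observing that fixing the valuation does not affect the argument, since the construction turns only on $\top$ and $\bot$.
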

\begin{proof}For symmetry assume that $\R^s$ is definable. Let $\Gamma$ define $\R^s$. Let $\r=\lbrace\langle p,q\rangle, \langle q,p\rangle\rbrace$. $\r$ is symmetric, so $\r\vDash\Gamma$. This means that for any $v$, any $\m=\model$ we have $\m\vDash\Gamma$. Let $v'$ be arbitrary and  $\m'=\langle v',\r\rangle$. Obviously $\m'\nvDash\top\to\bot$. Let $\r'=\r\cup\{\langle\top,\bot\rangle\}$. It is easy to see that $\langle \top,\bot\rangle\in\Omega^{\m'}$ (actually, $\langle \top,\bot\rangle$ is in $\Omega^{\m}$ for any $\m$), so $\r'\subseteq\r\cup\Omega^{\m'}$. Also $\r\setminus\Omega^{\m'}\subseteq\r'$. Let $\m''=\langle v',\r'\rangle$. Valuation is the same, so $\m''\in\SS^{\m'}$, hence $\m''\vDash\Gamma$. Note that $v'$ was arbitrary hence $\r'\vDash\Gamma$. But $\langle\bot,\top\rangle\notin\r'$, so $\r'$ is not symmetric! Contradiction.

Assume that $\R^n$ is definable. Let $\Delta$ define $\R^n$. Let $\r=\{\langle\neg p, q\rangle, \langle p,q\rangle\}$. Obviously $\r\in\R^n$. This means that $\r\vDash\Delta$. Let $v$ be arbitrary valuation. We know that $\m=\model\vDash\Delta$ and also that $\m\nvDash\neg\bot\to\bot$. This means that $\langle\neg\bot,\bot\rangle\in\om$. Let $\n=\nodel$ where $v'=v$ and $\r'=\r\cup\{\langle\neg\bot,\bot\rangle\}$. It is easy to see that $\r\setminus\om\subseteq\r'\subseteq\r\cup\om$, so $\n\vDash\Delta$. But $v=v'$ was arbitrary hence $\r'\vDash\Delta$. But $\langle\bot,\bot\rangle\notin\r'$ so $\r'\notin\r^n$. Contradiction. \end{proof}

Observe that we can use similar argument to prove:
\begin{cor}\label{f: model undefinability}
Let $\M^s$ be the set of symmetric models, $\M^n$ be the set of models with (n)-relations  i.e. $\M^s=\{\langle v,\r\rangle:\r\,\,\text{is symmetric}\}$, $\M^n=\{\langle v,\r\rangle:\neg\a\r\b\Longrightarrow\a\r\b\}$.  Both $\M^s$ and $\M^n$ are not definable.
\end{cor}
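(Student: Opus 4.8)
The plan is to mirror almost verbatim the argument of Proposition \ref{f: relation undefinability}, but now working with concrete models rather than passing from a relation to the class of all models over it. The key observation is that the undefinability proof for relations already produces, at its core, two specific models differing only in their relation, where one satisfies the candidate defining set and the other fails the structural condition; since model-definability is a strictly finer notion, the same witnesses should work directly. So first I would assume, for contradiction, that $\M^s$ is definable by some $\Gamma\subseteq\for$.

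For the symmetric case, I would take $\r=\{\langle p,q\rangle,\langle q,p\rangle\}$ and an arbitrary valuation $v'$, setting $\m'=\langle v',\r\rangle$. Since $\r$ is symmetric, $\m'\in\M^s$, so $\m'\vDash\Gamma$. Now I form $\r'=\r\cup\{\langle\top,\bot\rangle\}$ and $\m''=\langle v',\r'\rangle$. As noted in the excerpt, $\langle\top,\bot\rangle\in\Omega^{\m'}$ always holds, so $\r\setminus\Omega^{\m'}\subseteq\r'\subseteq\r\cup\Omega^{\m'}$, and since the valuation is unchanged we have $\m''\in\SS^{\m'}$. By Lemma \ref{f: S fact} (the $\SS$-set Lemma), $\m''\approx\m'$, hence $\m''\vDash\Gamma$ because $\Gamma$ consists of formulas and theoretical equivalence preserves the whole theory. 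But $\r'$ is not symmetric (as $\langle\bot,\top\rangle\notin\r'$), so $\m''\notin\M^s$, contradicting that $\Gamma$ defines $\M^s$. The case of $\M^n$ is identical in structure: start from $\r=\{\langle\neg p,q\rangle,\langle p,q\rangle\}\in$ (n)-relations, adjoin $\langle\neg\bot,\bot\rangle\in\om$ to obtain an $\SS$-set neighbour whose relation fails condition (n) because $\langle\bot,\bot\rangle$ is absent.

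The single genuine conceptual point — the only place where this differs from the relation-level proof — is the following. In Proposition \ref{f: relation undefinability} the phrase ``$v'$ was arbitrary, hence $\r'\vDash\Gamma$'' was needed to lift from models back to relations. Here I do not need that lift at all: I only need one model $\m''$ that satisfies $\Gamma$ yet lies outside $\M^s$, so I may fix any single valuation. This actually makes the model-level statement easier, not harder, which is exactly why the corollary follows ``by a similar argument.'' The main thing to get right is therefore the invocation of Lemma \ref{f: S fact}: I must verify that the perturbed model lies in the $\SS$-set of the original, which reduces to checking the two inclusions $\r\setminus\om\subseteq\r'$ and $\r'\subseteq\r\cup\om$ together with equality of valuations — all immediate once the added pair is confirmed to lie in $\om$. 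I expect no real obstacle; the entire content is the packaging of the two concrete witnesses and the appeal to the $\SS$-set Lemma to guarantee that membership in $\Gamma$ transfers along $\approx$.
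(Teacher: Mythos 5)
Your proposal is correct and follows essentially the same route as the paper: the paper's own proof of Corollary \ref{f: model undefinability} is just the one-line observation that the argument of Proposition \ref{f: relation undefinability} carries over unchanged because the perturbing pairs $\langle\top,\bot\rangle$ and $\langle\neg\bot,\bot\rangle$ lie in $\om$ regardless of the valuation, which is precisely the argument you spell out via Lemma \ref{f: S fact}. Your added remark that the model-level claim is in fact easier --- since one fixed valuation suffices and the ``$v'$ was arbitrary'' lift is only needed for relation-definability --- is accurate and consistent with the paper.
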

\begin{proof}
Adding valuations will not affect the argument in any way, since the reasoning is based on $\top$ and $\bot$.
\end{proof}  
Further we have:
\begin{prop}
$\M^s\cap\M^n$ and $\R^s\cap\R^n$ are undefinable.
\end{prop}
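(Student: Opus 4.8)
The plan is to reuse the two separate undefinability arguments from Proposition~\ref{f: relation undefinability}, observing that they were already engineered to survive intersection. The essential point is that the witnessing relations in that proposition, extended by the offending pair $\langle\top,\bot\rangle$ (resp.\ $\langle\neg\bot,\bot\rangle$), were constructed so that the added pair lives in the Omega set $\om$ of \emph{every} model. Since membership in $\SS^\m$ depends only on the Omega set (Lemma~\ref{f: S fact}), a formula set $\Gamma$ defining $\R^s\cap\R^n$ would have to be true on a relation precisely when it is both symmetric and satisfies $(n)$, and I will break this by producing a relation that is symmetric \emph{and} satisfies $(n)$ but whose forced $\SS$-set perturbation destroys one of the two properties.

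First I would fix a single relation $\r$ that simultaneously witnesses both conditions. A natural candidate is a relation built to be symmetric and $(n)$-closed on a small finite set of ``innocent'' pairs (e.g.\ taking the symmetric and $(n)$-closure of $\{\langle p,q\rangle\}$), chosen so that neither $\langle\top,\bot\rangle$ nor $\langle\neg\bot,\bot\rangle$ belongs to $\r$. Assuming $\Gamma$ defines $\R^s\cap\R^n$, I have $\r\vDash\Gamma$ because $\r$ meets both conditions. Then I would add to $\r$ a pair that sits in $\om$ for every valuation --- this is exactly the role played by $\langle\top,\bot\rangle$ and $\langle\neg\bot,\bot\rangle$ in the previous proof, since $\top\to\bot$ and $\neg\bot\to\bot$ are refuted in every model. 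By Lemma~\ref{f: S fact}, the enlarged relation $\r'=\r\cup\{\text{added pair}\}$ yields, for every valuation $v$, a model $\langle v,\r'\rangle\in\SS^{\langle v,\r\rangle}$, whence $\r'\vDash\Gamma$. But $\r'$ will fail to be symmetric (adding $\langle\top,\bot\rangle$ without $\langle\bot,\top\rangle$) or fail $(n)$ (adding $\langle\neg\bot,\bot\rangle$ without $\langle\bot,\bot\rangle$), so $\r'\notin\R^s\cap\R^n$, contradicting that $\Gamma$ defines the intersection.

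The one subtlety, and the step I expect to need the most care, is choosing the added pair so that its insertion violates the \emph{intersection} rather than merely one conjunct. For instance, adding $\langle\top,\bot\rangle$ breaks symmetry, but I must check this enlargement does not silently repair or destroy the $(n)$-status in a way that still leaves $\r'$ inside $\R^s\cap\R^n$ (it cannot, since failing symmetry already suffices for $\r'\notin\R^s\cap\R^n$); dually for the $(n)$-witness. The cleanest route is to run the symmetry-based argument verbatim: the relation $\r'$ obtained there is non-symmetric, hence automatically outside $\R^s\cap\R^n$ regardless of its $(n)$-status, so a defining $\Gamma$ would be forced to declare a non-member true on it. The model version $\M^s\cap\M^n$ follows identically, since, as in Corollary~\ref{f: model undefinability}, valuations play no role in an argument driven entirely by $\top$ and $\bot$.

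I would therefore structure the proof as a short reduction: restate that the symmetry witness of Proposition~\ref{f: relation undefinability} can be taken to also satisfy condition $(n)$ (verified by direct inspection of the finite relation), note that the forced $\SS$-set member $\r'$ is non-symmetric and hence lies outside both $\R^s$ and $\R^s\cap\R^n$, and conclude the contradiction for $\R^s\cap\R^n$; the model case $\M^s\cap\M^n$ is handled by the same remark as Corollary~\ref{f: model undefinability}.
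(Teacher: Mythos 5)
Your proposal is correct and takes essentially the same route as the paper: start from a relation that lies in $\R^s\cap\R^n$, adjoin the pair $\langle\top,\bot\rangle$ (which belongs to $\om$ for every model since $\top\to\bot$ is refuted everywhere), invoke the $\SS$-set lemma to force the enlarged relation to satisfy the putative defining set $\Gamma$, and conclude via failure of symmetry alone, with valuations playing no role in the model case. The paper merely simplifies your base witness by taking $\r=\emptyset$, which is trivially symmetric and $(n)$-closed, so no closure verification is needed.
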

\begin{proof}
Assume that $\M^s\cap\M^n$ is definable. Let $\Gamma$ define $\M^s\cap\M^n$. Let $\m=\model$, where $\r=\emptyset$. Empty relation is both symmetric and meets the (n) condition, so $\m\in\M^s\cap\M^n$. This means $\m\vDash\Gamma$. Let $\n=\langle v, \r'\rangle$ where $\r'=\langle\top,\bot\rangle$. Obviously $\r'\subseteq\r\cup\om$, so $\n\vDash\Gamma$. But $\n\notin\M^s\cap\M^n$, since $\n$ is not symmetric.

To show that $\R^s\cap\R^n$ is undefinable we reason in the same manner, assuming that $\R^s\cap\R^n$ is definable by some set of formulas $\Sigma$. We notice that $\r=\emptyset\in\R^s\cap\R^n$. We use \eqref{f: S fact} to show that $\r'=\langle\top,\bot\rangle$ satisfies the defining set of formulas $\Sigma$ which immediately leads us to contradiction.
\end{proof}

\subsection{The $\SS$ set -- syntactical version}

In order to obtain $\sm$ we have to know the structure of a model $\m$. It can be the case, though, that we do not know the structure of a given model $\m$, but we want to construct the set $\eqm$ knowing only its theory. This is precisely the case in standard completeness proofs, where a model is constructed from maximally consistent set of formulas. Now we shall introduce the way of obtaining $\eqm$ from the theory of $\m$, which will prepare us for the later completeness proofs. Assume that we know the theory $\tthm$ of some model $\m$. 
\begin{definition}\label{df S set}We define two relations in the following way:	
	\begin{align*}
		\rthmin &=\{\langle\a,\b\rangle: \a\wto\b\in\tthm\},\\
		\rthmax &=\{\langle\a,\b\rangle: \a\wto\b\in\tthm\,\,or\,\,\a\to\b\notin\tthm\}.
	\end{align*}
We also define the valuation $\vthm=\Phi\longrightarrow\{0,1\}$ win an obvious manner: $\vthm(\a)=1$ iff $\a\in\tthm$, for any $\a\in\Phi$. Here there is no surprise -- valuations are uniquely determined by the theories of respective models (unlike relations).
Now we define the $\Sthm$ set, the set of models similar to $\m$:
\[\Sthm=\{\n=\nodel:v'=\vthm\,\,\&\,\,\rthmin\subseteq\r'\subseteq\rthmax\}\]
\end{definition}

\begin{lem}\label{lm: S lemma}
For any $\m=\model$ the following equality holds: 
\[ \Sthm=\{\n=\nodel:\tthm=\thn\}\]

\end{lem}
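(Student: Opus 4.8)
The plan is to reduce everything to the semantic $\SS$-set Lemma \ref{f: S fact}, which already tells us that $\sm=\eqm=\{\n=\nodel:\tthm=\thn\}$. So it suffices to prove the single identity $\Sthm=\sm$. Both sides are described in exactly the same shape: they fix the valuation of $\n$ and sandwich its relation $\r'$ between a minimal and a maximal relation. Hence $\Sthm=\sm$ will follow once I verify the three componentwise identities
\[\vthm=v,\qquad \rthmin=\r\setminus\om,\qquad \rthmax=\r\cup\om,\]
after which the defining conditions ``$v'=\vthm$ and $\rthmin\subseteq\r'\subseteq\rthmax$'' and ``$v'=v$ and $\r\setminus\om\subseteq\r'\subseteq\r\cup\om$'' become literally the same set-builder.

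The valuation identity is immediate: for $\a\in\Phi$ we have $\vthm(\a)=1$ iff $\a\in\tthm$ iff $\m\vDash\a$ iff $v(\a)=1$. For the two relation identities the crucial observation is the truth clause for $\wto$ unwound into a statement about $\tthm$ and $\om$: since $\m\vDash\a\wto\b$ iff $\m\vDash\a\to\b$ and $\langle\a,\b\rangle\in\r$, the membership $\a\wto\b\in\tthm$ holds exactly when $\langle\a,\b\rangle\in\r$ and $\langle\a,\b\rangle\notin\om$ (recall $\om=\{\langle\a,\b\rangle:\m\nvDash\a\to\b\}$). This directly yields $\rthmin=\r\setminus\om$. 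For $\rthmax$ I rewrite its two defining disjuncts: ``$\a\wto\b\in\tthm$'' is ``$\langle\a,\b\rangle\in\r\setminus\om$'', while ``$\a\to\b\notin\tthm$'' is ``$\langle\a,\b\rangle\in\om$'', so $\rthmax=(\r\setminus\om)\cup\om=\r\cup\om$.

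I do not expect a serious obstacle; the content is entirely in correctly unfolding the semantic clause for $\wto$ and then translating the disjunction in the definition of $\rthmax$. The only points that deserve care are the little set-theoretic simplification $(\r\setminus\om)\cup\om=\r\cup\om$ and keeping straight that ``$\a\to\b\notin\tthm$'' is precisely membership in $\om$ (rather than its complement). Once the three identities are in place, $\Sthm=\sm$ is a matter of substitution, and Lemma \ref{f: S fact} closes the argument.

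Finally, if one preferred not to route through Lemma \ref{f: S fact}, the same statement admits a direct proof of the two inclusions. The $(\supseteq)$ direction is easy: from $\tthm=\thn$ one reads off $v'=\vthm$, then $\rthmin\subseteq\r'$ (every $\a\wto\b\in\tthm=\thn$ forces $\langle\a,\b\rangle\in\r'$), and $\r'\subseteq\rthmax$ (if $\langle\a,\b\rangle\in\r'$ but $\langle\a,\b\rangle\notin\rthmax$, then $\a\to\b\in\thn$ and $\a\wto\b\notin\thn$, contradicting the $\wto$ clause). The $(\subseteq)$ direction is an induction on formula complexity showing $\n\vDash\c$ iff $\m\vDash\c$; the boolean cases use only $v'=v$, and in the $\wto$ and $\wi$ steps the sandwich $\rthmin\subseteq\r'\subseteq\rthmax$ is exactly what is needed to transfer the relational side condition. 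This induction is where the only real (though still routine) work would sit.
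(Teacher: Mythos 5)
Your proposal is correct, and its main route is genuinely different from the paper's. The three componentwise identities $\vthm=v$, $\rthmin=\r\setminus\om$, $\rthmax=(\r\setminus\om)\cup\om=\r\cup\om$ all check out (the middle one is exactly the unwound truth clause for $\wto$), so $\Sthm=\sm$ holds as a literal identity of set-builders, and citing Lemma \ref{f: S fact} is legitimate since that lemma is imported from \cite{ja} rather than derived from the present statement, so there is no circularity. The paper instead proves the lemma directly and self-containedly: the inclusion $\Sthm\subseteq\{\n:\tthm=\thn\}$ by induction on formula complexity (showing $\n\vDash\tthm$, with completeness of model theories upgrading the inclusion of theories to equality), and the converse contrapositively, by case analysis on the three ways $\n$ can fall outside $\Sthm$ ($v'\neq\vthm$, $\rthmin\nsubseteq\r'$, $\r'\nsubseteq\rthmax$) --- essentially the fallback argument you sketch in your final paragraph. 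Your reduction is shorter and arguably more informative, since it exposes that the syntactic and semantic $\SS$-sets are the \emph{same set}, not merely two descriptions of $\eqm$; the paper's inductive proof buys something else, namely a template that survives when there is no ambient model: in Proposition \ref{f: canonical set} the theory $\tthm$ is replaced by an arbitrary mcs $\splus$, for which no $\m$, hence no $\om$, is given in advance, so your componentwise reduction has nothing to reduce to, whereas the induction (run on the mcs properties of Proposition \ref{f: mcsfact}) goes through essentially unchanged --- which is precisely how the paper disposes of that proposition.
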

\begin{proof}
Let $\m=\model$ be arbitrary. To prove the left to right inclusion, let $\n=\nodel\in\Sthm$. First, we will show that for any $\a\in\tthm$ we have $\n\vDash\a$. Assume that $\a\in\tthm$. Let $\a\in\Phi$. By \eqref{df S set} we know that $\vthm=v'$, hence $\n\vDash\a$. Let $\a=\neg\b$. $\neg\b\in\tthm$ iff $\b\notin\tthm$. By inductive hypothesis $\n\nvDash\b$,  which means $\n\vDash\neg\b$. Let $\a=\b\lor\c$. $\b\lor\c\in\tthm$ iff $\b\in\tthm$ or $\c\in\tthm$. By inductive hypothesis $\n\vDash\b$ or $\n\vDash\c$, so $\n\vDash\b\lor\c$. The proof for the rest of boolean cases goes in a similar manner. Assume that $\a=\b\wto\c$. $\b\wto\c\in\tthm$, so $\b\notin\tthm$ or $\c\in\tthm$, which by inductive hypothesis gives us  $\n\nvDash\b$ or $\n\vDash\c$. By \eqref{df S set} we know that $\langle\b,\c\rangle\in\rthmin$ and $\rthmin\subseteq\r'$, which means that $\langle\b,\c\rangle\in\r'$. So $\n\vDash\b\wto\c$. Let $\a=\b\wi\c$. $\b\wi\c\in\tthm$ means $\b\in\tthm$ and $\c\in\tthm$. By inductive hypothesis we obtain $\n\vDash\b$ and $\n\vDash\c$. We also know that $\b\wto\c\in\tthm$, since $\m\vDash\b\wi\c$ implies $\m\vDash\b\wto\c$. This means that $\langle\b,\c\rangle\in\rthmin$. We know that $\rthmin\subseteq\r'$, so $\n\vDash\b\wi\c$.  This way, we have shown that $\n\vDash\tthm$.

To show the opposite inclusion, assume that $\n=\nodel\notin\Sthm$. Hence at least one of the following must hold 1) $v'\neq\vthm$ or 2) $\rthmin\nsubseteq\r'$ or 3) $\r'\nsubseteq\rthmax$. Let 1) hold. This means that for some $\a\in\Phi$ we have $\n\vDash\a$ and $\a\notin\tthm$ or  $\n\nvDash\a$ and $\a\in\tthm$. In both cases we get $\thn\neq\tthm$. Suppose 2) is the case. This means that there is $\langle\a,\b\rangle\in\for\times\for$ such that $\a\wto\b\in\tthm$ and $\langle\a,\b\rangle\notin\r'$. This means that $\n\nvDash\a\wto\b$, so $\thn\neq\tthm$. Finally assume that 3) holds. This means that there is $\langle\a,\b\rangle\in\r'$ such that $\a\wto\b\notin\tthm$ and $\a\to\b\in\tthm$. Now we have two possibilities: either $\n\nvDash\a\to\b$ or $\n\vDash\a\to\b$. If the first disjunct is true, then $\tthm\neq\thn$ follows immediately. Thus, assume the second disjunct holds. This way, we obtain $\n\vDash\a\wto\b$. But $\a\wto\b\notin\tthm$, so again $\tthm\neq\thn$.
\end{proof}



\subsection{Axiomatic systems for undefinable sets of Epstein structures}
Despite the undefinability results stated in facts \eqref{f: model undefinability} and \eqref{f: relation undefinability}, we claim that it is possible to give an  adequate axiomatic systems which generate the consequence relations of $\R^n$, $\R^s$ and $\R^s\cap\R^n$. 
To make the notions already introduced in the preliminary section a little more precise, we shall start from recalling some textbook definitions of Hilbert-style proof, (maximal) consistency and so on. By $\cpl$ we mean all classical tautologies in the language of Epstein logic.
\begin{enumerate}
\item $\cpl$
\item $(p\wto q)\to(p\to q)$
\item $(p\wto q)\wedge(p\wedge q)\leftrightarrow(p\wi q)$
\end{enumerate}
again, the rules of the system are Modus Ponens and substitution. $\rF\subseteq\for$ is the least set of formulas closed under MP and substitution. Let $\Lambda\subseteq\for$. $\rF\Lambda$ is the least set of formulas containing $\rF\cup\Lambda$ which is closed under the two above-mentioned rules.
\begin{definition}[$\rF\Lambda$-proof]\label{df: proof} Let $\Sigma\subseteq\for$, $\a\in\for$. We will define the notion of a proof in a standard way for normal Epstein axiomatic systems.
The $\rF\Lambda$-proof of a formula $\varphi$ from the set of premises $\Sigma$ is a sequence $\psi_1,...,\psi_n$, where $\psi_n=\varphi$, and for each $i\in\lbrace 1,...,n\rbrace$, $\psi_i$ fulfils at least one of the following:
\begin{align}
\psi_i\in\rF\Lambda\cup\Sigma, or\\
\text{there are }j,k<i,\text{ such that }\psi_j=\psi_k\to\psi_i.
\end{align}
When there is an $\rF\Lambda$-proof of $\a$ from $\Sigma$ ($\a$ is provable from $\Sigma$), we will write $\Sigma\vdash_{\rF\Lambda}\a$. Observe that $\emptyset\vdash_{\rF\Lambda}\a$ means $\a\in\rF$. By theses of $\rF\Lambda$ we mean elements of $\rF$. We will shortly write $\vdash_{\rF\Lambda}\a$ instead $\emptyset\vdash_{\rF\Lambda}\a$.
\end{definition}

\begin{definition}[Maximal consistent set]  Let $\Gamma$ be a set of formulas. $\Gamma$ is $\rF\Lambda$-consistent iff $\Gamma\nvdash_{\rF\Lambda}\bot$; otherwise, it is inconsistent. $\Gamma$ is $\rF\Lambda$-maximally consistent set ($\rF\Lambda$-mcs) iff it is $\rF\Lambda$-consistent and all of its proper extensions are inconsistent.
\end{definition}
\noindent Using the well-known Lindenbaum construction, we can prove the following:
\begin{lem}{(Lindenbaum's Lemma)}\label{lind}
\\Any $\rF\Lambda$-consistent set of formulas $\Sigma$ can be extended to $\rF$-maximal consistent set ($\rF\Lambda$-mcs).
\end{lem}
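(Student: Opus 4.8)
The plan is to carry out the classical Lindenbaum construction, exploiting the countability of $\for$ together with the finitary character of the proof relation fixed in Definition \ref{df: proof}. First I would choose an enumeration $\a_0,\a_1,\a_2,\dots$ of all formulas of $\for$, which exists because the language is countable. Starting from $\Sigma_0:=\Sigma$, I would build an increasing chain of sets by deciding each formula in turn: put $\Sigma_{n+1}:=\Sigma_n\cup\{\a_n\}$ whenever this enlargement is $\rF\Lambda$-consistent, and $\Sigma_{n+1}:=\Sigma_n$ otherwise. The candidate extension is then $\Sigma^+:=\bigcup_{n\in\omega}\Sigma_n$, and it plainly contains $\Sigma$.

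The first thing to verify is that every $\Sigma_n$ is $\rF\Lambda$-consistent. This is immediate by induction on $n$: the base set $\Sigma_0=\Sigma$ is consistent by hypothesis, and at each step we enlarge $\Sigma_n$ only when the result remains consistent, while otherwise we leave it unchanged. Next I would show that the union $\Sigma^+$ is itself $\rF\Lambda$-consistent. The key observation is the finitary nature of proofs: an $\rF\Lambda$-proof is by Definition \ref{df: proof} a finite sequence, so it can cite only finitely many premises. Hence if $\Sigma^+\vdash_{\rF\Lambda}\bot$ held, the underlying derivation would use only a finite subset $\Delta\subseteq\Sigma^+$; since the chain $(\Sigma_n)$ is increasing and its union covers $\Delta$, we would have $\Delta\subseteq\Sigma_m$ for some $m$. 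Monotonicity of $\vdash_{\rF\Lambda}$ (enlarging the premise set never destroys a derivation) would then give $\Sigma_m\vdash_{\rF\Lambda}\bot$, contradicting the consistency of $\Sigma_m$ just established.

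Finally I would check maximality, i.e. that every proper extension of $\Sigma^+$ is inconsistent. Fix any $\a\notin\Sigma^+$ and let $\a=\a_n$ in the enumeration. Because $\a_n$ never entered the chain, in particular it was not added at stage $n$, which by the construction can only mean that $\Sigma_n\cup\{\a_n\}$ was inconsistent. Since $\Sigma_n\subseteq\Sigma^+$, a further appeal to monotonicity yields that $\Sigma^+\cup\{\a\}$ is inconsistent as well. As $\a\notin\Sigma^+$ was arbitrary, no formula can be consistently adjoined to $\Sigma^+$, so $\Sigma^+$ is an $\rF\Lambda$-mcs extending $\Sigma$, as required.

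I do not expect a genuine obstacle here, since this is the standard argument; the only two points that need care are the uses of the finitary and monotone character of $\vdash_{\rF\Lambda}$, first to transport consistency from the finite stages $\Sigma_m$ up to the union $\Sigma^+$, and then to transport the inconsistency witnessing a rejected formula from the stage $\Sigma_n$ up to $\Sigma^+$.
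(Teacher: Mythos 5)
Your proof is correct and is precisely the argument the paper has in mind: the paper offers no written proof, invoking only ``the well-known Lindenbaum construction'', and your enumeration-and-union argument, with consistency preserved via the finitary and monotone character of $\vdash_{\rF\Lambda}$ from Definition \ref{df: proof}, is exactly that standard construction. No gaps; the two points you flag (finitariness to lift consistency to the union, monotonicity to lift rejection-inconsistency to $\Sigma^+$) are indeed the only places requiring care.
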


Each maximally consistent set enjoys some standard properties:
\begin{prop}\label{f: mcsfact}
The following hold for any $\rF\Lambda-mcs$ $\Sigma^+$:
\begin{align*}
\text{if } \vdash_{\rF\Lambda}\a &\text{ then } \a\in\Sigma^+,\\
\neg\a\in\Sigma^+ &\text{ iff } \a\notin\Sigma^+,\\
\a\lor\b\in\Sigma^+ &\text{ iff }\a\in\Sigma^+ \text{ or }\b\in\Sigma^+,\\
\varphi , \psi\in\Sigma^+ &\text{ iff } \varphi \wedge \psi\in\Sigma^+,\\
\a\to\b\in\Sigma^+ &\text{ iff } \a\notin\Sigma^+ \text{ or }\b\in\Sigma^+,\\
\a\leftrightarrow\b &\text{ iff } \a\in\Sigma^+ \text{ if and only if }\b\in\Sigma^+,\\
\text{if }\varphi\in\Sigma^+ \text{ and }\vdash_{\rF\Lambda}\varphi\to\psi, &\text{ then }\psi\in\Sigma^+.
\end{align*}
\end{prop}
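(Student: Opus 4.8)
The plan is to reduce all seven items to two facts about $\Sigma^+$: that it is deductively closed, and that $\cpl\subseteq\rF\subseteq\rF\Lambda$, so every classical tautology is a thesis and hence, once deductive closure is available, a member of $\Sigma^+$. First I would record the deduction theorem for $\vdash_{\rF\Lambda}$. Inspecting Definition \ref{df: proof}, the only genuine inference rule available in a proof \emph{from premises} is MP (substitution is not a rule there — it is already absorbed into $\rF\Lambda$, which is closed under it), and $\rF\Lambda\supseteq\cpl$ contains every classical tautology, in particular $\a\to(\b\to\a)$ and $(\a\to(\b\to\c))\to((\a\to\b)\to(\a\to\c))$. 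Hence the usual induction on proof length yields $\Sigma\cup\{\a\}\vdash_{\rF\Lambda}\b$ iff $\Sigma\vdash_{\rF\Lambda}\a\to\b$.

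Next I would establish deductive closure: if $\Sigma^+\vdash_{\rF\Lambda}\c$ then $\c\in\Sigma^+$. Suppose not; since $\c\notin\Sigma^+$, maximality makes $\Sigma^+\cup\{\c\}$ a proper, hence inconsistent, extension, so $\Sigma^+\cup\{\c\}\vdash_{\rF\Lambda}\bot$. By the deduction theorem $\Sigma^+\vdash_{\rF\Lambda}\c\to\bot$, and combining this with $\Sigma^+\vdash_{\rF\Lambda}\c$ by MP gives $\Sigma^+\vdash_{\rF\Lambda}\bot$, contradicting consistency. The first item of the proposition is now immediate, since $\vdash_{\rF\Lambda}\a$ gives $\Sigma^+\vdash_{\rF\Lambda}\a$ trivially, whence $\a\in\Sigma^+$.

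The remaining items are then routine Lindenbaum bookkeeping, each obtained by feeding an appropriate tautology into MP and appealing to deductive closure. For the negation clause, one direction uses that $\a\to(\neg\a\to\bot)$ is a thesis, so $\a,\neg\a\in\Sigma^+$ would force $\bot$ and violate consistency; the other uses maximality plus the deduction theorem to get $\Sigma^+\vdash_{\rF\Lambda}\a\to\bot$, hence $\Sigma^+\vdash_{\rF\Lambda}\neg\a$, when $\a\notin\Sigma^+$. The disjunction, conjunction, and implication clauses use the theses $\a\to(\a\lor\b)$ and $(\a\lor\b)\to(\neg\a\to(\neg\b\to\bot))$; $\a\to(\b\to(\a\wedge\b))$ together with the projections $(\a\wedge\b)\to\a$, $(\a\wedge\b)\to\b$; and $\b\to(\a\to\b)$, $\neg\a\to(\a\to\b)$ respectively, invoking the already-proved negation clause wherever a formula's absence must be converted into the presence of its negation. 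The biconditional clause I would reduce to the conjunction and implication clauses via the tautological equivalence of $\a\leftrightarrow\b$ with $(\a\to\b)\wedge(\b\to\a)$, and the last item is just deductive closure applied to the single MP step from $\varphi$ and $\varphi\to\psi$.

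There is no deep obstacle here: the content is entirely standard. The one point demanding care is the justification of the deduction theorem — specifically the observation that substitution is \emph{not} a rule of the premise-relative proof system of Definition \ref{df: proof} (were it a rule, the deduction theorem would fail), so that the classical induction on proof length goes through unchanged. Everything downstream is mechanical.
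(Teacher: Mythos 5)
Your proposal is correct and follows essentially the same route the paper intends: the paper's own proof is just the one-line remark that each item follows from the construction of an $\rF\Lambda$-mcs together with the closure of $\rF\Lambda$ under classical logic, and your write-up (deduction theorem, deductive closure, then the appropriate classical tautologies fed into MP) is precisely the standard elaboration of that remark. Your one substantive observation — that in the premise-relative proof system of Definition~\ref{df: proof} substitution is absorbed into $\rF\Lambda$ rather than being a rule, so the deduction theorem goes through by the usual induction — is accurate and correctly identifies the only point the paper leaves tacit that actually needs checking.
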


Each fact can be easily proven from the construction of $\rF\Lambda-mcs$ and the fact that $\rF\Lambda$ is closed under classical logic.

\begin{prop}[Set of all models for a given mcs]\label{f: canonical set}
Let $\rF\Lambda$ be Epstein logic and let $\splus$ be $\rF\Lambda-mcs$. The exact set of Epstein models which verify $\splus$ is $\Ssplus=\{\model:v=\vsplus,\,\,\rsplusmin\subseteq\r\subseteq\rsplusmax\}$, where $\vsplus$, $\rsplusmin$, $\rsplusmax$ are defined in an analogous way as it is done in \eqref{df S set}.
\end{prop}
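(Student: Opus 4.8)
The plan is to reduce the statement to the semantic $\SS$-set Lemma~\ref{lm: S lemma} by manufacturing a single model whose theory is exactly $\splus$. First I would build the canonical model $\m_0=\model$ by setting $v(\psi)=1$ iff $\psi\in\splus$ for $\psi\in\Phi$, and $\langle\a,\b\rangle\in\r$ iff $\a\wto\b\in\splus$, exactly as at the end of Section 2. The inductive truth argument given there (which only invokes the $\wto$- and $\wi$-axioms together with MP, all available in $\rF\subseteq\rF\Lambda$) shows $\m_0\vDash\splus$, i.e.\ $\splus\subseteq\th(\m_0)$. Because $\splus$ is an $\rF\Lambda$-mcs it is negation-complete (Proposition~\ref{f: mcsfact}), and so is the model theory $\th(\m_0)$; hence whenever $\a\notin\splus$ we have $\neg\a\in\splus$, giving $\m_0\vDash\neg\a$ and $\a\notin\th(\m_0)$. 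Combining the two inclusions yields $\th(\m_0)=\splus$.

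Next I would observe that $\vsplus,\rsplusmin,\rsplusmax$ are, by their very definition, the objects of Definition~\ref{df S set} computed from the theory $\splus=\th(\m_0)$. Consequently the canonical set $\Ssplus$ literally coincides with $\SS^{\th(\m_0)}$. Applying Lemma~\ref{lm: S lemma} to $\m_0$ then gives
\[ \Ssplus=\SS^{\th(\m_0)}=\{\n=\nodel:\th(\n)=\th(\m_0)\}=\{\n:\thn=\splus\}. \]
It remains to identify $\{\n:\thn=\splus\}$ with the set of models verifying $\splus$. The inclusion from left to right is immediate. For the converse, suppose $\n\vDash\splus$, so $\splus\subseteq\thn$. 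Both $\splus$ and $\thn$ are negation-complete, so if some $\a\in\thn$ were not in $\splus$, then $\neg\a\in\splus\subseteq\thn$, forcing $\n$ to satisfy both $\a$ and $\neg\a$ -- impossible. Hence $\thn\subseteq\splus$ and therefore $\thn=\splus$. This closes the chain of equalities and proves the proposition.

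The only genuinely delicate point is the first step: transferring the purely syntactic maximality of $\splus$ into the semantic statement $\th(\m_0)=\splus$. Everything hinges on the canonical model actually verifying $\splus$ -- i.e.\ on the $\wto$ and $\wi$ clauses of the truth argument going through -- and on the fact that model theories, being negation-complete, cannot properly extend an mcs. Once these are in place the proposition is essentially a bookkeeping reduction to Lemma~\ref{lm: S lemma}. I would also remark that the statement can be proved directly, mirroring the proof of Lemma~\ref{lm: S lemma} with the clauses of Proposition~\ref{f: mcsfact} playing the role previously played by the closure properties of $\tthm$; there the $\wto$ clause, where $\r\subseteq\rsplusmax$ is used to rule out spurious related pairs, is again the crux.
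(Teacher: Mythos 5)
Your proof is correct, but it takes a genuinely different route from the paper. The paper's proof (stated only as a one-line sketch) re-runs the induction of Lemma~\ref{lm: S lemma} directly on $\splus$, with the closure properties of an $\rF\Lambda$-mcs from Proposition~\ref{f: mcsfact} standing in for the closure properties that $\tthm$ enjoys automatically; no detour through any particular model is taken. You instead reduce the proposition to Lemma~\ref{lm: S lemma} as a black box: you build the canonical model $\m_0$ of Section~2, check $\th(\m_0)=\splus$, observe $\Ssplus=\SS^{\th(\m_0)}$ by definition-chasing, and then convert ``$\n$ verifies $\splus$'' into ``$\thn=\splus$'' via negation-completeness of both $\splus$ and model theories. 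What your route buys is economy and two explicitly recorded facts the paper leaves implicit -- that every $\rF\Lambda$-mcs is the theory of an actual Epstein model (you correctly note this needs only $\rF\subseteq\rF\Lambda$, so the Section~2 truth argument transfers verbatim), and that satisfaction of a negation-complete set forces theory equality; what the paper's direct route buys is independence from the Section~2 construction and a proof that literally parallels Lemma~\ref{lm: S lemma}, case by case. One small point of hygiene in your write-up: the Section~2 argument as you cite it establishes only $\splus\subseteq\th(\m_0)$ formula by formula, but the induction itself must carry the biconditional $\m_0\vDash\a$ iff $\a\in\splus$ as its hypothesis (the $\neg$ case needs the converse direction); this is the same looseness already present in the paper's Section~2 sketch and is standard, so it is a presentational wrinkle rather than a gap -- and your closing remark correctly identifies the paper's intended direct proof as an alternative.
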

Using the fact \eqref{f: mcsfact}, the proof can be given in a similar way as in case of lemma $\eqref{lm: S lemma}$.

Before we move on, we need to recall some results from \cite{klon20}. Klonowki's thesis concerns generalised Epstein logics with even more Epstein-style connectives than \cite{jarkacz}. He studies the language with basic boolean connectives $\neg,\lor,\wedge,\to,\leftrightarrow$ and Epstein style ones: $\lor^\mathcal{E},\wedge^\mathcal{E},\to^\mathcal{E},\leftrightarrow^\mathcal{E}$ interpreted in the standard Epstein way: $\model\vDash\varphi\ast^\mathcal{E}\psi$ iff $\model\vDash\varphi\ast\psi$ and $\langle\varphi,\psi\rangle\in\r$. As Klonowski observes, a language this rich is capable of expressing Epstein relation in the sense that: $\langle\varphi,\psi\rangle\in\r$ iff $\r\vDash (\varphi\to^\mathcal{E}\psi)\lor(\varphi\lor^\mathcal{E}\psi)$. Define $\varphi\mathcal{R}\psi:=(\varphi\to^\mathcal{E}\psi)\lor(\varphi\lor^\mathcal{E}\psi)$ \cite{klon20}. This way, axiomatization becomes trivial: basic system is $\cpl$ with axioms of the form $(p\ast^\mathcal{E} q)\leftrightarrow (p\ast q)\wedge (p\mathcal{R} q)$, for each $\ast\in\{\lor,\wedge,\to,\leftrightarrow\}$. Obviously Modus Ponens and substitution are added as only rules. The cases under consideration are equally trivial: for symmetric relations, we add $p\mathcal{R}q\to q\mathcal{R}p$. The reader sees immediately how it goes for the other cases -- it is a matter of rewriting the condition. Thus, we claim that weaker Epstein-type languages are more interesting: precisely the interesting ones are those which are not capable of expressing the relation -- just like the one we are currently investigating.

To make things precise, we show that $\for$ is indeed incapable of expressing the relation. Say that $\for$ expresses Epstein relations iff for any $\r\subseteq\for^2$, any $\varphi,\psi\in\for$, there is $\chi\in\for$, s.t. $\langle\varphi,\psi\rangle\in\r$ iff $\r\vDash\chi$.

Now we can prove the:
\begin{prop}[$\r$ inexpressibility]\label{p: r inexpressibility}
$\for$ does not express Epstein relations.
\end{prop}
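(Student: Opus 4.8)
The plan is to exhibit a single pair $\langle\top,\bot\rangle$ whose membership in a relation is invisible to the entire language, and then to play two relations against each other that differ only on this pair yet validate exactly the same formulas. Concretely, I would take $\varphi=\top$, $\psi=\bot$ and argue that no formula $\chi$ can satisfy, uniformly over all relations, the equivalence $\langle\top,\bot\rangle\in\r$ iff $\r\vDash\chi$.

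The key observation is that the Epstein formulas $\top\wto\bot$ and $\top\wi\bot$ are false in every model. Indeed, for any $\m=\model$ we have $\m\vDash\top$ and $\m\nvDash\bot$, so the boolean side of the respective truth condition --- $\m\nvDash\top$ or $\m\vDash\bot$ in the case of $\wto$, and $\m\vDash\top$ and $\m\vDash\bot$ in the case of $\wi$ --- fails regardless of whether $\langle\top,\bot\rangle\in\r$. Consequently the pair $\langle\top,\bot\rangle$ can never contribute to making an Epstein subformula true, so toggling it in the relation leaves the truth value of every formula unchanged. I would make this precise either by a routine induction on the complexity of $\chi$ (the only subformulas sensitive to this pair are $\top\wto\bot$ and $\top\wi\bot$, both always false), or, more slickly, by invoking the $\SS$-set machinery: since $\m\nvDash\top\to\bot$, we have $\langle\top,\bot\rangle\in\om$ for every $\m$, so for a fixed $v$ the models $\langle v,\emptyset\rangle$ and $\langle v,\{\langle\top,\bot\rangle\}\rangle$ lie in a common $\SS$-set and are therefore theoretically equivalent by Lemma \ref{f: S fact}.

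From this I would conclude that $\emptyset$ and $\{\langle\top,\bot\rangle\}$ validate exactly the same formulas: for each $v$ the theories of $\langle v,\emptyset\rangle$ and $\langle v,\{\langle\top,\bot\rangle\}\rangle$ coincide, hence $\emptyset\vDash\chi$ iff $\{\langle\top,\bot\rangle\}\vDash\chi$ for every $\chi\in\for$. Now suppose toward a contradiction that some $\chi$ expressed membership of $\langle\top,\bot\rangle$. Since $\langle\top,\bot\rangle\in\{\langle\top,\bot\rangle\}$ we would need $\{\langle\top,\bot\rangle\}\vDash\chi$, whereas $\langle\top,\bot\rangle\notin\emptyset$ forces $\emptyset\nvDash\chi$; this contradicts the agreement just established. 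Hence no such $\chi$ exists and $\for$ does not express Epstein relations.

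The main obstacle --- and the reason the witnessing pair must be chosen with care --- is the universal quantifier over valuations hidden inside $\r\vDash\chi$. For a pair of distinct variables the analogous claim fails: $\langle p,q\rangle\in\r$ is in fact expressible, since $(p\to q)\to(p\wto q)$ is validated by $\r$ precisely when $\langle p,q\rangle\in\r$. What rescues $\langle\top,\bot\rangle$ is exactly that its associated boolean condition $\top\to\bot$ is unsatisfiable, so the pair is invisible under every valuation simultaneously; I would emphasize this contrast to show that the inexpressibility genuinely hinges on the structure of the chosen pair.
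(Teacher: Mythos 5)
Your proof is correct and follows essentially the same route as the paper: both use the pair $\langle\top,\bot\rangle$, observe that $\langle\top,\bot\rangle\in\om$ for every model (since $\m\nvDash\top\to\bot$), and invoke the $\SS$-set lemma to conclude that $\langle v,\emptyset\rangle$ and $\langle v,\{\langle\top,\bot\rangle\}\rangle$ are theoretically equivalent for each $v$, contradicting the existence of a defining $\chi$. Your closing contrast with the expressible pair $\langle p,q\rangle$ via $(p\to q)\to(p\wto q)$ is a nice addition not in the paper, correctly highlighting why the witnessing pair must have an unsatisfiable boolean condition.
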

\begin{proof}
Let $\r_0=\{\langle\top,\bot\rangle\}$. Assume that there is $\chi\in\for$ such that for any $\r\subseteq\for^2$, $\r\vDash\chi$ iff $\langle\top,\bot\rangle\in\r$. Let $v$ be a valuation. By assumption, $\r_0\vDash\chi$, so also $\langle v,\r_0\rangle\vDash\chi$. It is immediate to observe that $\langle v, \emptyset\rangle\in\SS^{\langle v,\r_0\rangle}$. Thus $\langle v,\emptyset\rangle\vDash\chi$. But $v$ is arbitrary, so $\emptyset\vDash\chi$ which means that $\langle\top,\bot\rangle\in\emptyset$ -- contradiction.
\end{proof}

To conclude our digression, we claim that in the face of the undefinability of our two properties and inexpressibility of the Epstein relation in our language, the task of providing adequate axiomatization for chosen sets of relations becomes a non-trivial endeavour.
\subsection{Logic of symmetric relations $\rF S$}
In order to obtain the logical system $\rF S$, we simply enrich $\rF$ with additional axiom: \[(s)\,\,(p\wto q)\to((q\wto p)\lor\neg(q\to p))\]

\begin{thm} The logic $\rF S$ is sound with respect to the set of symmetric Epstein relations $\r^s$.\end{thm}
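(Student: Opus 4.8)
The plan is to show that the set $L=\{\a\in\for:\R^s\vDash\a\}$ of all formulas valid on symmetric relations contains every thesis of $\rF S$. Since $\rF S$ is by definition the least set containing $\rF\cup\{(s)\}$ and closed under Modus Ponens and substitution, it suffices to verify four things: that $\rF\subseteq L$, that the new axiom $(s)$ belongs to $L$, and that $L$ is closed under MP and under substitution. The first point is immediate: by the soundness direction of $\vdash=\vDash$ noted in Section 2, every thesis of $\rF$ is true in \emph{every} model $\model$, in particular in every model whose relation is symmetric, so $\rF\subseteq L$. Closure of $L$ under MP is also immediate, since $\to$ is evaluated classically in each single model: if $\model\vDash\a$ and $\model\vDash\a\to\b$ for every symmetric $\r$ and every $v$, then $\model\vDash\b$ for all such $\model$.

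The semantic heart is checking $(s)\in L$. Fix a symmetric $\r$ and any $v$, put $\m=\model$, and suppose $\m\vDash p\wto q$; I must show $\m\vDash(q\wto p)\lor\neg(q\to p)$. From $\m\vDash p\wto q$ I read off $\langle p,q\rangle\in\r$, hence $\langle q,p\rangle\in\r$ by symmetry. If $\m\nvDash q\to p$ the right disjunct $\neg(q\to p)$ is true and we are done. Otherwise $\m\vDash q\to p$, i.e.\ $\m\nvDash q$ or $\m\vDash p$; combined with $\langle q,p\rangle\in\r$ this is exactly the truth condition for $\m\vDash q\wto p$, so the left disjunct holds. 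In either case $\m\vDash(s)$, and since $\r$ and $v$ were arbitrary, $(s)\in L$.

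The one step that requires genuine care -- and the main obstacle -- is closure of $L$ under substitution, because $\R^s$ is not a set of valuations but of relations on formulas, so a substitution reshuffles which pairs are ``named''. The clean way is a substitution lemma: for a model $\m=\model$ and $\sigma\in\subst$ define the pullback $\m^\sigma=\langle v^\sigma,\r^\sigma\rangle$ by $v^\sigma(p)=1$ iff $\model\vDash\sigma(p)$ and $\r^\sigma=\{\langle\a,\b\rangle:\langle\sigma(\a),\sigma(\b)\rangle\in\r\}$. A routine induction on formula complexity (using $\sigma(\a\wto\b)=\sigma(\a)\wto\sigma(\b)$ and $\sigma(\a\wi\b)=\sigma(\a)\wi\sigma(\b)$ in the two Epstein cases) gives $\m\vDash\sigma(\a)$ iff $\m^\sigma\vDash\a$. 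The crucial observation is that the pullback preserves symmetry: if $\langle\a,\b\rangle\in\r^\sigma$ then $\langle\sigma(\a),\sigma(\b)\rangle\in\r$, hence $\langle\sigma(\b),\sigma(\a)\rangle\in\r$ by symmetry of $\r$, so $\langle\b,\a\rangle\in\r^\sigma$. Consequently, if $\a\in L$ and $\r$ is symmetric, then $\m^\sigma$ is again symmetric, so $\m^\sigma\vDash\a$ and therefore $\m\vDash\sigma(\a)$; as $\r,v$ ranged over arbitrary symmetric data, $\sigma(\a)\in L$. With all four conditions verified, induction on $\rF S$-proofs yields $\rF S\subseteq L$, which is exactly the asserted soundness.
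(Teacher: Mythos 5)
Your proof is correct, and its semantic core --- reading $\langle p,q\rangle\in\r$ off $\m\vDash p\wto q$, flipping the pair by symmetry, and then combining $\m\vDash q\to p$ with $\langle q,p\rangle\in\r$ to obtain $\m\vDash q\wto p$ --- is exactly the computation the paper performs. Where you genuinely diverge is in the bookkeeping around substitution. The paper's proof is three lines long: it verifies the axiom directly for \emph{arbitrary formulas} $\a,\b$ rather than for the letters $p,q$, which validates every substitution instance of $(s)$ in one stroke; together with the observation from Section 2 that $\varphi\in\rF\Lambda$ iff $\varphi$ is MP-derivable from the substitution instances of the axioms, and with soundness of $\rF$ over \emph{all} models, this already yields the theorem (the paper leaves these routine steps implicit). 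You instead check $(s)$ only at the concrete letters $p,q$ and then restore closure under substitution via the pullback lemma $\m\vDash\sigma(\a)$ iff $\m^\sigma\vDash\a$, where $\r^\sigma=\{\langle\a,\b\rangle:\langle\sigma(\a),\sigma(\b)\rangle\in\r\}$. That lemma is sound --- the induction goes through in the $\wto$ and $\wi$ cases precisely because $\sigma$ is an endomorphism of the formula algebra --- and it earns its keep: it isolates the reason why $\{\a\in\for:\R^s\vDash\a\}$ is substitution-closed even though theories of single relations are not (recall the paper's own example $\r=\{\langle p,p\rangle\}$ with $\r\vDash p\wto p$ but $\r\nvDash q\wto q$), namely that $\R^s$ is closed under the inverse-image operation $\r\mapsto\r^\sigma$, since symmetry is a universally quantified condition on pairs. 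The same lemma would transfer without change to $\R^n$ and $\R^{sn}$. In short: the paper's route is shorter because verifying the schema at arbitrary $\a,\b$ absorbs the substitution issue; yours is more explicit, self-contained, and reusable as a general tool for relation classes of this kind.
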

\begin{proof}To prove soundness, let $\r\in\R^s$. Let $v$ be arbitrary valuation and let $\m=\model$. Assume $\m\vDash\a\wto\b$. Hence $\a\r\b$. We know that $\r$ is symmetric so also $\b\r\a$. Assume now that $\m\vDash\b\to\a$. This means that $\m\vDash\b\wto\a$.
\end{proof}

\begin{thm}
If $\Sigma\vDash_{\R^s}\a$, then $\Sigma\vdash_{\rF S}\a$.
\end{thm}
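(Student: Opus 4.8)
The plan is to prove the contrapositive by a Lindenbaum-style canonical model argument, the whole weight of which rests on turning the axiom (s) into the symmetry of a suitably chosen relation. Suppose $\Sigma\nvdash_{\rF S}\a$. Then $\Sigma\cup\{\neg\a\}$ is $\rF S$-consistent, so by Lindenbaum's Lemma \ref{lind} it extends to an $\rF S$-mcs $\splus$. By the standard mcs properties (Proposition \ref{f: mcsfact}) we have $\Sigma\subseteq\splus$ and $\neg\a\in\splus$, hence $\a\notin\splus$.

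Next I would invoke Proposition \ref{f: canonical set}: every model $\model$ with $v=\vsplus$ and $\rsplusmin\subseteq\r\subseteq\rsplusmax$ verifies $\splus$, and conversely these are the only ones. It therefore suffices to exhibit a single \emph{symmetric} relation $\r$ lying between $\rsplusmin$ and $\rsplusmax$; the model $\m=\langle\vsplus,\r\rangle$ will then satisfy $\m\vDash\splus$, hence $\m\vDash\Sigma$ (as $\Sigma\subseteq\splus$), while $\m\vDash\neg\a$ and so $\m\nvDash\a$. Since $\r$ is symmetric we have $\r\in\R^s$, so $\m$ is a witness contradicting $\Sigma\vDash_{\R^s}\a$. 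The natural candidate is the symmetric closure $\r:=\rsplusmin\cup\{\langle\b,\a\rangle:\langle\a,\b\rangle\in\rsplusmin\}$, which is symmetric and contains $\rsplusmin$ by construction.

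The main obstacle — and the only place the new axiom is actually used — is verifying the upper bound $\r\subseteq\rsplusmax$. For the pairs already in $\rsplusmin$ this is immediate, so the work is to show that each reversed pair $\langle\b,\a\rangle$ with $\a\wto\b\in\splus$ lies in $\rsplusmax$. Here I would substitute $p\mapsto\a$, $q\mapsto\b$ into the axiom (s), obtaining $(\a\wto\b)\to((\b\wto\a)\lor\neg(\b\to\a))\in\rF S\subseteq\splus$; since $\a\wto\b\in\splus$, modus ponens gives $(\b\wto\a)\lor\neg(\b\to\a)\in\splus$. The disjunction property of the mcs then splits into two cases: if $\b\wto\a\in\splus$ then $\langle\b,\a\rangle\in\rsplusmin\subseteq\rsplusmax$; and if $\neg(\b\to\a)\in\splus$ then $\b\to\a\notin\splus$, so $\langle\b,\a\rangle\in\rsplusmax$ directly from the definition of $\rsplusmax$. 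Either way $\langle\b,\a\rangle\in\rsplusmax$, which completes the sandwiching $\rsplusmin\subseteq\r\subseteq\rsplusmax$ and hence the whole argument. I expect no other subtlety: once this inclusion is in place, soundness is irrelevant and the contradiction follows mechanically.
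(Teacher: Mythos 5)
Your proposal is correct and is essentially the paper's own proof: your symmetric closure $\rsplusmin\cup\{\langle\b,\a\rangle:\langle\a,\b\rangle\in\rsplusmin\}$ is literally the paper's relation $\rsplusmin\cup S$ with $S=\{\langle\b,\a\rangle:\a\wto\b\in\splus\}$, and you verify the only nontrivial point, $\r\subseteq\rsplusmax$, by exactly the paper's appeal to axiom $(s)$, the mcs disjunction property, and Proposition \ref{f: canonical set}. The sole cosmetic difference is that you get symmetry for free from the closure construction where the paper spells out the two-case check.
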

\begin{proof}
Assume $\Sigma\nvdash_{\rF S}\a$. Hence $\Sigma\cup\{\neg\a\}$ is $\rF S$-consistent. Let $\splus$ be the $\rF S$-mcs extension of $\Sigma\cup\{\neg\a\}$. Let $\m=\langle\vsplus,\r\rangle$, $\r=\rsplusmin\cup S$ where $S=\{\langle\b,\a\rangle: \a\wto\b\in\splus\}$. To finish the proof, we have to show that 1) $\r$ is symmetric and 2) that $\m$ is the model for $\splus$ i.e. $\m\vDash\splus$. For 1) assume that $\a\r\b$. Hence $\langle\a,\b\rangle\in\rsplusmin$ or $\langle\a,\b\rangle\in S$. If the first disjunct is true, then $\a\wto\b\in\splus$ meaning that $\langle\b,\a\rangle\in S$ and so $\b\r\a$. If it is the case that $\langle\a,\b\rangle\in S$, then $\b\wto\a\in\splus$. This means $\langle\b,\a\rangle\in\rsplusmin$. Notice that in order to prove 2), by fact \eqref{f: canonical set}, it is enough to show that $\r\subseteq\rsplusmax$. Assume, then, that $\langle\a,\b\rangle\in\r$. This means that either 1) $\langle\a,\b\rangle\in\rsplusmin$ or 2) $\langle\a,\b\rangle\in S$. If 1) is the case, then $\a\wto\b\in\splus$, hence also $\langle\a,\b\rangle\in\rsplusmax$. If 2) is the case, then $\b\wto\a\in\splus$. But also by axiom $(s)$ and properties of $\splus$, $\a\wto\b\in\splus$ or $\a\to\b\notin\splus$. This means that $\langle\a,\b\rangle\in\rsplusmax$. This way, we have shown that $\r\subseteq\rsplusmax$. We conclude that $\m\vDash\splus$.
\end{proof}

\subsection{Logic of $\R^n$, the system $\rF N$}
In order to obtain the logical system $\rF N$, we add the following two axioms to the system $\rF$ ($N:=\{n1,\,\,n2\}$): \[(n1)\,\,(\neg p\wto q)\to((p\wto q)\lor\neg(p\to q))\] \[(n2)\,\,((\neg\neg p\wto q)\wedge\neg(\neg p\to q))\to(p\wto q)\]

\begin{thm}[Soundness of $\rF N$]
The logic $\rF N$ is sound with respect to $\R^n$ -- the set of relations fulfilling the (n) condition.
\end{thm}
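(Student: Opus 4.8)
The plan is to establish the inclusion $\rF N\subseteq\{\a:\R^n\vDash\a\}$, mirroring the soundness argument already given for $\rF S$. By the syntactic characterization of $\rF\Lambda$ recalled in the preliminary section, $\a\in\rF N$ iff $\{\sigma(\psi):\psi\in\{n1,n2\},\sigma\in\subst\}\vdash\a$, so every formula of $\rF N$ admits a Hilbert derivation whose leaves are either theses of $\rF$ or substitution instances of the axioms $(n1)$ and $(n2)$, the only rule being Modus Ponens. Since $\vdash$ coincides with $\vDash$ over all of $\M$, every thesis of $\rF$ is valid on every relation, hence on every $\r\in\R^n$; and $\R^n$-validity is trivially preserved by Modus Ponens. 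Therefore it suffices to check that each substitution instance of $(n1)$ and of $(n2)$ is valid on an arbitrary $\r\in\R^n$. As in the $\rF S$ proof, I would handle all substitution instances simultaneously by putting arbitrary formulas $\a,\b$ in the roles of $p,q$ and letting $v$ range over all valuations, writing $\m=\model$.

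For $(n1)$, i.e.\ $(\neg\a\wto\b)\to((\a\wto\b)\lor\neg(\a\to\b))$, I would assume the antecedent $\m\vDash\neg\a\wto\b$, which yields $\langle\neg\a,\b\rangle\in\r$; the (n) condition then gives $\langle\a,\b\rangle\in\r$. A case split on whether $\m\vDash\a\to\b$ finishes the argument: if it holds, then together with $\langle\a,\b\rangle\in\r$ we obtain $\m\vDash\a\wto\b$, the left disjunct; otherwise $\m\vDash\neg(\a\to\b)$, the right disjunct.

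For $(n2)$, i.e.\ $((\neg\neg\a\wto\b)\wedge\neg(\neg\a\to\b))\to(\a\wto\b)$, I would assume both conjuncts of the antecedent. From $\m\vDash\neg(\neg\a\to\b)$ I read off $\m\vDash\neg\a$ and $\m\nvDash\b$; in particular $\m\nvDash\a$, so $\m\vDash\a\to\b$ holds vacuously. From $\m\vDash\neg\neg\a\wto\b$ I extract $\langle\neg\neg\a,\b\rangle\in\r$, and then apply the (n) condition twice: first with $\neg\a$ in the role of the negated formula to obtain $\langle\neg\a,\b\rangle\in\r$, then with $\a$ to obtain $\langle\a,\b\rangle\in\r$. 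Combining $\m\vDash\a\to\b$ with $\langle\a,\b\rangle\in\r$ yields $\m\vDash\a\wto\b$, the consequent.

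The only genuine content — and the place where a misstep could hide — is the treatment of $(n2)$: one must notice that the conjunct $\neg(\neg\a\to\b)$ is precisely what forces $\a$ to be false (so that $\a\to\b$ becomes vacuous and only the relational membership remains to be secured), and one must strip the double negation off the related pair by two successive applications of (n). Everything else is routine: the validity of the $\rF$ theses, preservation under Modus Ponens, and the reduction to substitution instances all come for free from the characterization of $\rF N$, so that substitution never needs to be treated as a separate rule and no relation-pullback construction is required.
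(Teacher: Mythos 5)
Your proof is correct and takes essentially the same route as the paper: both verify the validity of arbitrary substitution instances of $(n1)$ and $(n2)$ on an arbitrary $\r\in\R^n$, using one application of the (n) condition for $(n1)$ and two successive applications for $(n2)$, together with the observation that $\neg(\neg\a\to\b)$ forces $\m\nvDash\a$ and $\m\nvDash\b$, making $\a\to\b$ vacuously true. Your explicit preliminary reduction (theses of $\rF$ are valid everywhere, $\R^n$-validity is preserved by Modus Ponens, so only the axiom instances need checking) is routine bookkeeping that the paper leaves implicit.
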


\begin{proof}
Let $\r\in\R^n$. Let $v$ be arbitrary valuation and $\a,\b$ be arbitrary formulas. Let $\m=\model$. In order to prove validity of $(n1)$ assume that $\m\vDash\neg\a\wto\b$. This means that $\neg\a\r\b$. So also $\a\r\b$. Assume further that $\m\vDash\a\to\b$. This means $\m\vDash\a\wto\b$. For $(n2)$ assume that $\m\vDash(\neg\neg\a\wto\b)\wedge\neg(\neg\a\to\b)$. This means that $\neg\neg\a\r\b$, $\neg\a\r\b$ and also $\a\r\b$. Since $\m\vDash\neg(\neg\a\to\b)$, we know that $\m\nvDash\a$ and $\m\nvDash\b$, which means that $\m\vDash\a\to\b$. We conclude that $\m\vDash\a\wto\b$.
\end{proof}

\begin{thm}[Strong completeness]
The logic $\rF N$ is strongly complete with respect to $\R^n$ i.e. $\Sigma\vDash_{\R^n}\a$ implies $\Sigma\vdash_{\rF N}\a$ for any $\Sigma\cup\{\a\}\subseteq\for$. 
\end{thm}

\begin{proof}
Let $\Sigma\subseteq\for$ and $\a\in\for$. Assume that $\Sigma\nvdash_{\rF N}\a$. This means that $\Sigma\cup\{\neg\a\}$ is $\rF N$-consistent set. Hence it can be extended to maximally $\rF N$-consistent set. Let $\splus$ be such set. Let $\m=\langle\vsplus, \r\rangle$ be such that $\r=\rsplusmin\cup N$, where $N=\{\langle\a,\b\rangle: \neg\a\wto\b\in\splus\}$. To show that $\m\vDash\splus$ we have to show that $\r\subseteq\rsplusmax$. Assume that $\langle\a,\b\rangle\in\r$. If $\langle\a,\b\rangle\in\rsplusmin$, then obviously $\langle\a,\b\rangle\in\rsplusmax$. Assume that $\langle\a,\b\rangle\in N$. This means that $\neg\a\wto\b\in\splus$. By $(n1)$ and maximality of $\splus$, we obtain $\a\wto\b\in\splus$ or $\a\to\b\notin\splus$. Hence by definition \eqref{df S set} we know that $\langle\a,\b\rangle\in\rsplusmax$. This way, we have proven $\r\subseteq\rsplusmax$, which means that $\m\vDash\splus$. Now, we have to show that $\r\in\r^n$ which means that $\neg\a\r\b$ implies $\a\r\b$. Assume that $\neg\a\r\b$. Hence $\langle\neg\a,\b\rangle\in\rsplusmin$ or $\langle\neg\a,\b\rangle\in N$. In the first case $\neg\a\wto\b\in\splus$. So $\langle\a,\b\rangle\in N$, which means that $\a\r\b$. If the second disjunct is true, then $\neg\neg\a\to\b\in\splus$. Either $\neg\a\wto\b\in\splus$, or $\neg\a\wto\b\notin\splus$. If the first disjunct is true, then $\langle\a,\b\rangle\in N$ which finishes the proof. Assume then, that $\neg\a\wto\b\notin\splus$. By axiom $(n1)$, maximality and Modus Ponens we obtain $\neg\a\wto\b\in\splus$ or $\neg(\neg\a\to\b)\in\splus$. Hence $\neg(\neg\a\to\b)\in\splus$. By $(n2)$ we obtain $\a\wto\b\in\splus$. This means that $\langle\a,\b\rangle\in\rsplusmin$, so $\a\r\b$.
\end{proof}
\subsection{The $\rF SN$ system -- the logic of $\R^{sn}$}
By the set of relations $\R^{sn}$, we mean the $\R^s\cap\R^n$.

In order to obtain the logic $\rF SN$ we combine the axioms of $\rF S$ together with those of $\rF N$ and add the following two schemas: 
\[(sn)\,\,(\neg( p\to q)\wedge(\neg p\wto q))\to(q\wto p)\]
\[(ns)\,\,(\neg(\neg p\to q)\wedge(q\wto\neg p))\to(p\wto q)\]

\begin{thm}[Soundness of $\rF SN$]
$\rF SN$ is sound with respect to $\R^{sn}$.
\end{thm}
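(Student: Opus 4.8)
The plan is to verify that every axiom schema of $\rF SN$ is valid on each relation in $\R^{sn}$ and that the rules Modus Ponens and substitution preserve validity; the latter is entirely standard. Since $\R^{sn}=\R^s\cap\R^n$, every $\r\in\R^{sn}$ is at once symmetric and satisfies (n), so the axioms inherited from the weaker systems require no fresh argument: the classical and Epstein base axioms of $\rF$ are valid in every model, axiom $(s)$ is valid because $\r$ is symmetric, and $(n1),(n2)$ are valid because $\r$ meets (n)---all of this was already established in the soundness proofs for $\rF S$ and $\rF N$. Thus the only genuinely new work is to check the two added schemas $(sn)$ and $(ns)$, and for these I would exploit both structural properties of $\r$ at once.

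For $(sn)$, I would fix $\r\in\R^{sn}$, an arbitrary valuation $v$, and set $\m=\model$, then assume $\m\vDash\neg(p\to q)\wedge(\neg p\wto q)$. From $\m\vDash\neg(p\to q)$ I read off $\m\vDash p$ and $\m\nvDash q$, and from $\m\vDash\neg p\wto q$ I extract $\langle\neg p,q\rangle\in\r$. Applying (n) gives $\langle p,q\rangle\in\r$, and symmetry then yields $\langle q,p\rangle\in\r$. Since $\m\nvDash q$, the implicational part of the truth condition for $q\wto p$ holds trivially, so $\m\vDash q\wto p$, as required.

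The argument for $(ns)$ runs dually: assuming $\m\vDash\neg(\neg p\to q)\wedge(q\wto\neg p)$, the first conjunct forces $\m\nvDash p$ and $\m\nvDash q$, while the second yields $\langle q,\neg p\rangle\in\r$. Here I first apply symmetry to obtain $\langle\neg p,q\rangle\in\r$ and then (n) to obtain $\langle p,q\rangle\in\r$. Because $\m\nvDash p$, the truth condition for $p\wto q$ is satisfied, so $\m\vDash p\wto q$.

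I do not anticipate a real obstacle: the proof is a routine case analysis. The only point demanding care is the order in which the two structural properties are invoked---for $(sn)$ one applies (n) before symmetry, whereas for $(ns)$ one applies symmetry before (n)---together with the negation bookkeeping needed to read off which atoms are true or false from the negated-implication conjuncts. Both are mechanical, so the whole soundness verification reduces to these two short computations on top of the inherited validities.
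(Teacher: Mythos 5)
Your proposal is correct and takes essentially the same route as the paper: the paper's proof also reduces everything to a semantic check of the two new schemas, reading $\m\vDash p$, $\m\nvDash q$ (resp.\ $\m\nvDash p$, $\m\nvDash q$) off the negated implication and applying the (n) condition and symmetry in exactly the orders you indicate for $(sn)$ and $(ns)$. One small caution about your remark that substitution-preservation is ``entirely standard'': the paper explicitly notes that theories of single Epstein relations are \emph{not} closed under substitution, so the clean justification is to read your verification schematically with arbitrary formulas $\a,\b$ in place of $p,q$ (nothing in your argument uses their atomicity), which is precisely how the paper phrases its proof.
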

\begin{proof}
Let $\r\subseteq\R^{sn}$, let $v$ be arbitrary valuation and let $\m=\model$. Assume that $\m\vDash\neg(\a\to\b)\wedge(\neg\a\wto\b)$. Hence $\m\vDash\a$ and $\m\nvDash\b$, which implies $\m\nvDash\b$ or $\m\vDash\a$. We also know that $\neg\a\r\b$. Since $\r\in\r^n$, we know that $\a\r\b$. Also $\b\r\a$ since $\r\in\r^s$. This way, we obtain $\m\vDash\b\wto\a$. For $(ns)$ assume $\m\vDash\neg(\neg\a\to\b)\wedge(\b\wto\neg\a)$. This way, we get $\m\nvDash\a$ or $\m\vDash\b$ and $\b\r\neg\a$. Hence $\neg\a\r\b$ by symmetry and $\a\r\b$ by the (n) condition. We conclude that $\m\vdash\a\wto\b$.
\end{proof}

\begin{thm}[Strong completeness of $\rF SN$]
$\Sigma\vDash_{\R^{sn}}\a$ implies $\Sigma\vdash_{\rF SN}\a$, for any $\Sigma\subseteq\for$, any $\a\in\for$.
\end{thm}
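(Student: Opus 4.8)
The plan is to argue by contraposition and to build a single canonical model whose relation is at once symmetric and of type (n), exactly in the spirit of the proofs for $\rF S$ and $\rF N$. Suppose $\Sigma\nvdash_{\rF SN}\a$. Then $\Sigma\cup\{\neg\a\}$ is $\rF SN$-consistent, so by Lemma \ref{lind} it extends to an $\rF SN$-mcs $\splus$. I would then take the relation of the model to be the union of $\rsplusmin$ with three further pieces, chosen so that the whole relation is closed under converse (giving symmetry) and under stripping a leading negation from the first coordinate (giving condition (n)), while never leaving $\rsplusmax$. Concretely, put
\[ S=\{\langle\b,\a\rangle:\a\wto\b\in\splus\},\quad N=\{\langle\a,\b\rangle:\neg\a\wto\b\in\splus\},\quad M=\{\langle\b,\a\rangle:\neg\a\wto\b\in\splus\}, \]
and set $\r=\rsplusmin\cup S\cup N\cup M$ and $\m=\langle\vsplus,\r\rangle$. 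Here $S$ and $M$ are the converses of $\rsplusmin$ and $N$ respectively.

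First I would observe that $\r$ is symmetric: taking converses merely permutes the four pieces among themselves ($\rsplusmin\leftrightarrow S$ and $N\leftrightarrow M$), so $\r=\r^{-1}$. Next I would verify that $\r$ satisfies (n), i.e. that $\langle\neg\a,\b\rangle\in\r$ implies $\langle\a,\b\rangle\in\r$, by splitting on which of the four pieces contains $\langle\neg\a,\b\rangle$. The case $\langle\neg\a,\b\rangle\in\rsplusmin$ is immediate, as it puts $\langle\a,\b\rangle$ into $N$; the case $\langle\neg\a,\b\rangle\in N$ is precisely the computation already carried out for $\rF N$, using $(n1)$ and $(n2)$; the remaining two cases, coming from the converse pieces $S$ and $M$, are where the two new axioms enter. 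From $\langle\neg\a,\b\rangle\in S$ one applies $(s)$ and then, in the hard subcase, $(ns)$; from $\langle\neg\a,\b\rangle\in M$ one applies $(n1)$ and then either reduces to the $S$-case or applies $(sn)$.

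Then, by Proposition \ref{f: canonical set}, to obtain $\m\vDash\splus$ it suffices to check $\r\subseteq\rsplusmax$, i.e. that each piece lands in $\rsplusmax=\{\langle\a,\b\rangle:\a\wto\b\in\splus\text{ or }\a\to\b\notin\splus\}$. For $\rsplusmin$ this is trivial, for $S$ it is axiom $(s)$, for $N$ it is $(n1)$, and for $M$ it requires a short chain combining $(n1)$, $(s)$, $(sn)$ and $(ns)$. Once $\m\vDash\splus$ is established, then since $\Sigma\subseteq\splus$ and $\neg\a\in\splus$ we get $\m\vDash\Sigma$ and $\m\nvDash\a$, while $\r\in\R^{sn}$ by the symmetry and (n) checks; this contradicts $\Sigma\vDash_{\R^{sn}}\a$ and closes the argument.

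The main obstacle is the $M$-piece, which records exactly the pairs produced by interleaving a symmetry step with two (n)-steps. Here one cannot propagate membership in $\rsplusmax$ one operation at a time, because the relation-level closure passes through pairs $\langle\gamma,\delta\rangle$ for which the corresponding $\gamma\wto\delta$ need not belong to $\splus$ (its implicational part $\gamma\to\delta$ may fail), so the single-condition axioms $(s)$, $(n1)$, $(n2)$ do not chain directly. The role of $(sn)$ and $(ns)$ is precisely to jump over such a bad intermediate pair: they encode the composite moves ``strip-then-swap'' and ``swap-then-strip'' together with the exact truth-condition guard ($\neg(p\to q)$, respectively $\neg(\neg p\to q)$) that guarantees the endpoint pair still lies in $\rsplusmax$. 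Verifying that these guards always match up in the $M$-case is the only genuinely delicate point; everything else is bookkeeping already rehearsed in the $\rF S$ and $\rF N$ proofs.
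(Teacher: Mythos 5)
Your proposal is correct, and it is essentially the paper's canonical-model argument with one structural variation worth noting. The paper takes $\r=\rsplusmin\cup S\cup N$ (only three pieces) and then \emph{proves} symmetry: the nontrivial case $\langle\a,\b\rangle\in N$ is discharged by $(n1)$ (giving $\a\wto\b\in\splus$ or $\a\to\b\notin\splus$) and, in the hard subcase, by $(sn)$, which puts $\langle\b,\a\rangle$ into $\rsplusmin$. You instead symmetrize by construction, adding $M=N^{-1}$, so that symmetry is free, and you pay for it elsewhere: an extra case $\langle\neg\a,\b\rangle\in M$ in the (n)-check and the extra inclusion $M\subseteq\rsplusmax$. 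Both of your extra obligations do go through exactly as you sketch: for $\langle\b,\a\rangle\in M$, i.e.\ $\neg\a\wto\b\in\splus$, axiom $(n1)$ yields $\a\wto\b\in\splus$ (whence $(s)$ gives $\langle\b,\a\rangle\in\rsplusmax$, indeed $\langle\b,\a\rangle\in S$) or $\neg(\a\to\b)\in\splus$ (whence $(sn)$ gives $\b\wto\a\in\splus$, so $\langle\b,\a\rangle\in\rsplusmin$). Note that this computation shows $M\subseteq\rsplusmin\cup S$, so your relation is \emph{extensionally identical} to the paper's: your variant does not produce a different model, it merely relocates the same applications of $(n1)$, $(s)$, $(sn)$ from the paper's symmetry check into your $\rsplusmax$- and (n)-checks. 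Two small inaccuracies, both harmless: your $M\subseteq\rsplusmax$ chain does not actually need $(ns)$ (that axiom is used only in the $S$-case of the (n)-check, as in the paper), and your closing worry about ``guards failing to match'' in the $M$-case dissolves once one sees that $(n1)$ always hands you exactly the guard that $(s)$ or $(sn)$ requires.
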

\begin{proof}
Let $\Sigma\cup\{\a\}\subseteq\for$. Assume that $\Sigma\nvdash_{\rF SN}\a$. This means $\Sigma\cup\{\neg\a\}$ is $\rF SN$-consistent. Let $\splus$ be its maximally $\rF SN$-consistent extension. Let $\m=\langle\vsplus,\r\rangle$, $\r=\rsplusmin\cup S\cup N$, where $S$ and $N$ are as above. We have already shown that $S,N\subseteq\rsplusmax$ so the only thing left is to prove that $\r\in\R^{sn}$. First to show that $\r$ is symmetric assume that $\a\r\b$. If $\langle\a,\b\rangle\in\rsplusmin$, then $\langle\b,\a\rangle\in S$. If $\langle\a,\b\rangle\in S$, then $\langle\b,\a\rangle\in\rsplusmin$. Assume then that $\langle\a,\b\rangle\in N$. Hence $\neg\a\wto\b\in\splus$. By axiom $(n1)$, this means that $\a\wto\b\in\splus$ or $\a\to\b\notin\splus$. If $\a\wto\b\in\splus$, then $\langle\b,\a\rangle\in S$. Let us then consider the case where $\a\to\b\notin\splus$. By axiom $(sn)$ we get $\b\wto\a\in\splus$ which means that $\langle\b,\a\rangle\in\rsplusmin$. 

To show that $\neg\a\r\b$ implies $\a\r\b$ assume that $\neg\a\r\b$. If $\langle\neg\a,\b\rangle\in\rsplusmin$, then  $\langle\a,\b\rangle\in N$ by axiom $(n1)$. If $\langle\neg\a,\b\rangle\in N$, then $\langle\a,\b\rangle\in\rsplusmin\cup N$ by previous result for $\rF N$. Assume that $\langle\neg\a,\b\rangle\in S$. This means that $\b\wto\neg\a\in\splus$. Either $\neg\a\wto\b\in\splus$, or $\neg\a\to\b\notin\splus$ by axiom $(s)$. If the former is the case, then $\langle\a,\b\rangle\in N$. Hence assume the latter. By axiom $(ns)$ we obtain $\a\wto\b\in\splus$, which means that $\langle\a,\b\rangle\in\rsplusmin$.
\end{proof}

\section{Interpolation}
We close our study of Epstein semantics with the proof of the interpolation theorem for $\mathcal{F}$. 
Let us start with a formulation of the theorem.
\begin{thm}[Interpolation]\label{t:interpolation}
If $\varphi\to\psi\in\mathcal{F}$ then there is $\chi$ such that $\V(\chi)\subseteq\V(\varphi)\cap\V(\psi)$ and $\varphi\to\chi\in\mathcal{F}$,  $\chi\to\psi\in\mathcal{F}$.
\end{thm}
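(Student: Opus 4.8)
The plan is to lift ordinary Craig interpolation for $\cpl$ through the standard translation $St$. The starting point is that every $\cpl$-valuation on $\at$ is realised by an Epstein model: given truth values for the letters $p_n$ and for the atoms $p_{\langle\alpha,\beta\rangle}$, put $v(p_n)$ accordingly and $\r=\{\langle\alpha,\beta\rangle:p_{\langle\alpha,\beta\rangle}\text{ true}\}$. Combined with the equivalence $\m\vDash\varphi$ iff $\m\models_\cpl St(\varphi)$ and with $\vdash=\vDash$, this gives the bridge $\varphi\in\mathcal{F}$ iff $\models_\cpl St(\varphi)$, and hence $\varphi\to\psi\in\mathcal{F}$ iff $\models_\cpl St(\varphi)\to St(\psi)$. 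First I would apply classical propositional interpolation to this valid implication, obtaining $C\in\for^\ast$ built only from atoms common to $St(\varphi)$ and $St(\psi)$, with $\models_\cpl St(\varphi)\to C$ and $\models_\cpl C\to St(\psi)$.

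The common atoms are the shared propositional letters together with shared relational atoms $p_{\langle\alpha,\beta\rangle}$. If such an atom occurs in both $St(\varphi)$ and $St(\psi)$, then $\alpha\wto\beta$ or $\alpha\wi\beta$ is a subformula of $\varphi$ and one of these is a subformula of $\psi$; in either case $\alpha$ and $\beta$ are subformulas of both, so they contain only common letters. I would then define the candidate interpolant $\chi$ by back-translating $C$: keep each common letter, and replace each relational atom $p_{\langle\alpha,\beta\rangle}$ by the Epstein formula $\alpha\wto\beta$. By construction $\chi\in\for$ and $\V(\chi)\subseteq\V(\varphi)\cap\V(\psi)$, while $St(\chi)$ is exactly $C$ with every common $p_{\langle\alpha,\beta\rangle}$ replaced by $St(\alpha\wto\beta)=(St(\alpha)\to St(\beta))\wedge p_{\langle\alpha,\beta\rangle}$.

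The difficulty, and the reason this is not a one-line corollary of classical interpolation, is that $p_{\langle\alpha,\beta\rangle}$ is not individually expressible in $\for$ (Proposition~\ref{p: r inexpressibility}), so the naive back-translation alters $C$: it replaces each relational atom by a guarded version of itself. To control this I would prove a guard-invariance lemma. Let $\sigma$ be the substitution sending each common $p_{\langle\alpha,\beta\rangle}$ to $(St(\alpha)\to St(\beta))\wedge p_{\langle\alpha,\beta\rangle}$ and fixing all other atoms; then $\models_\cpl St(\theta)\leftrightarrow\sigma(St(\theta))$ for every $\theta\in\for$. This is a routine induction on $\theta$: the only non-boolean cases are $\theta=\alpha\wto\beta$ and $\theta=\alpha\wi\beta$, where the relational atom already occurs conjoined with its guard ($St(\alpha)\to St(\beta)$ and $St(\alpha)\wedge St(\beta)$ respectively, the latter classically implying the former), so re-inserting the guard under $\sigma$ is absorbed up to $\cpl$-equivalence, while the inductive hypothesis disposes of the relational atoms occurring inside the guards themselves.

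With the lemma the proof closes quickly. Since $St(\chi)=\sigma(C)$, applying $\sigma$ to the two valid implications and using $\sigma(St(\varphi))\equiv St(\varphi)$ and $\sigma(St(\psi))\equiv St(\psi)$ yields $\models_\cpl St(\varphi)\to St(\chi)$ and $\models_\cpl St(\chi)\to St(\psi)$; translating back through the bridge gives $\varphi\to\chi\in\mathcal{F}$ and $\chi\to\psi\in\mathcal{F}$, as required. I expect the guard-invariance step to be the main obstacle: the classical interpolant may use the relational atoms in unguarded positions that have no Epstein counterpart, and the lemma is precisely what guarantees that forcing them into guarded, hence back-translatable, form costs nothing semantically.
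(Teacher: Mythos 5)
Your proposal is correct, but it takes a genuinely different route from the paper. The paper gives a Maehara-style, purely model-theoretic argument: assuming $\varphi\to\psi$ has no interpolant, it extends $\langle\{\varphi\},\{\psi\}\rangle$ step by step through the proper subformulas of $\varphi$ and $\psi$ to a maximal non-separable pair $\langle\Gamma_j,\Sigma_k\rangle$, then constructs an explicit Epstein model from that pair (reading $\r$ off occurrences of $\wto$- and $\wi$-formulas in $\Gamma_j$ and of their negations in $\Sigma_k$) and verifies by induction that the model realises the pair, contradicting $\varphi\to\psi\in\mathcal{F}$. You instead lift classical Craig interpolation through the standard translation of Section 4; your bridge $\varphi\in\mathcal{F}$ iff $\models_\cpl St(\varphi)$ is justified exactly as you say, since every valuation of $\at$ is realised by an Epstein model, so $\models_\cpl$ is plain tautologousness and is closed under substitution. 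Your guard-absorption lemma $\models_\cpl St(\theta)\leftrightarrow\sigma(St(\theta))$ is sound: in $St(\alpha\wto\beta)$ the atom $p_{\langle\alpha,\beta\rangle}$ already occurs conjoined with the guard $St(\alpha)\to St(\beta)$, and in $St(\alpha\wi\beta)$ the stronger conjunct $St(\alpha)\wedge St(\beta)$ classically absorbs it. Two choices you made are essential and correctly made: the back-translation must use $\alpha\wto\beta$ rather than $\alpha\wi\beta$ (the $\wi$-guard implies the $\wto$-guard but not conversely, so the $\wto$ case of the lemma would fail otherwise), and the polarity of relational atoms in the classical interpolant $C$ is irrelevant because you apply $\sigma$ to both sides of the valid implications instead of claiming $\sigma(C)\equiv C$. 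Your observation that a shared $p_{\langle\alpha,\beta\rangle}$ forces $\alpha,\beta$ to be subformulas of both $\varphi$ and $\psi$ secures the variable condition. As to what each approach buys: yours is shorter, modular, and shows that interpolation for $\mathcal{F}$ is inherited from $\cpl$ via $St$, a natural companion to the characterization theorem, whereas the paper's proof is self-contained and produces an explicit countermodel whose structure is exploited in the remarks following the proof. One caveat shared by both proofs: in the degenerate case $\V(\varphi)\cap\V(\psi)=\emptyset$ the interpolant must be $\top$ or $\bot$, which in this language are abbreviations containing $p_0$; the paper's own separators $\{\bot\}$ and $\{\neg\bot\}$ have the same defect, so both arguments implicitly treat $\top$ and $\bot$ as variable-free constants.
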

We shall refer to formulas like $\chi$ in the above formulated interpolation theorem as interpolants (of $\varphi$ and $\psi$). Before moving directly to the proof, we shall first introduce some useful definitions.
\begin{definition}[Realisable pairs]\label{d:realisable pairs}
Let $\Gamma,\Sigma\subseteq\for$. The pair $\langle\Gamma,\Sigma\rangle$ will be called realisable iff there is $\m\in\M$ such that $\m\vDash\varphi$ for any $\varphi\in\Gamma$ and $\m\nvDash\psi$ for any $\psi\in\Sigma$.
\end{definition}
It is easy to observe that by \ref{d:realisable pairs}, $\varphi\to\psi\notin\mathcal{F}$ is equivalent to $\langle\{\varphi\},\{\psi\}\rangle$ being realisable. We shall also introduce the notion of a separable pair.
\begin{definition}[Separable pairs]\label{d:separable pairs}
We say that a pair $\langle\Gamma,\Sigma\rangle$ is separable iff there is $\chi$ such that $\V(\chi)\subseteq\V(\Gamma)\cap\V(\Sigma)$ and both $\langle\Gamma,\{\chi\}\rangle$ and $\langle\{\chi\},\Sigma\rangle$ are not realisable. In such cases, we say that $\{\chi\}$ separates $\langle\Gamma,\Sigma\rangle$.
\end{definition}
Now we can start the proof of theorem \ref{t:interpolation}.
\begin{proof}
Assume that $\varphi\to\psi$ does not have an interpolant. We will prove that the pair $t_0=\langle\Gamma_0,\Sigma_0\rangle=\langle\{\varphi\},\{\psi\}\rangle$ is realisable. By our assumption, we know that $t_0$ is not separable. We shall construct a finite sequence of pairs starting from $t_0$. First let $\varphi_1,...,\varphi_j$ and $\psi_1,...,\psi_k$ be an enumeration of all $\varphi$'s proper subformulas and all $\psi$'s proper subformulas respectively. Let $i<j$. We shall construct the first part of our sequence in the following way:

\begin{align*}
t_{i+1} &=
\begin{cases}
\langle\Gamma_i\cup\{\varphi_{i+1}\},\Sigma_0\rangle, & \text{if }\langle\Gamma_i\cup\{\varphi_{i+1}\},\Sigma_0\rangle \text{ is not separable};\\
\langle\Gamma_i\cup\{\neg\varphi_{i+1}\},\Sigma_0\rangle, & \text{if }\langle\Gamma_i\cup\{\varphi_{i+1}\},\Sigma_0\rangle \text{ is separable}
\end{cases}
\end{align*}
First observe that each $t_i$, $i\leq j$ is not separable. We already mentioned that $t_0$ is not separable (otherwise it would have an interpolant). Assume that $t_{i+1}$ is separable. Then by the construction, both $\langle\Gamma_i\cup\{\varphi_{i+1}\},\Sigma_0\rangle$ and $\langle\Gamma_i\cup\{\neg\varphi_{i+1}\},\Sigma_0\rangle$ are separable. This means that there are $\chi_1,\chi_2$, such that $\V(\chi_1)\subseteq\V(\Gamma\cup\{\varphi_{i+1}\})\cap\V(\Sigma_0)$, $\V(\chi_2)\subseteq\V(\Gamma\cup\{\neg\varphi_{i+1}\})\cap\V(\Sigma_0)$ and all the pairs: $\langle\Gamma_i\cup\{\varphi_{i+1}\},\{\chi_1\}\rangle$, $\langle\{\chi_1\},\Sigma_0\rangle$, $\langle\Gamma_i\cup\{\neg\varphi_{i+1}\},\{\chi_2\}\rangle$ and $\langle\{\chi_2\},\Sigma_0\rangle$ are not realisable. Note that for any $i\leq j$, $\V(\Gamma_i)=\V(\Gamma_0)$, since at each step of the construction we add only $\varphi$'s subformulas. For this reason, $\V(\chi_1\lor\chi_2)\subseteq\V(\Gamma_i)\cap\V(\Sigma_0)$. Both $\langle\Gamma_i,\{\chi_1\lor\chi_2\}\rangle$ and $\langle\{\chi_1\lor\chi_2\},\Sigma_0\rangle$ are not realisable, because otherwise we obtain contradiction with the assumption that $t_{i+1}$ is separable. Hence $t_i$ is separable, which proves the first observation by contraposition.

Now we move on to the second part of our sequence where $m<k$:
\begin{align*}
t_{j+m+1} &=
\begin{cases}
\langle\Gamma_j,\Sigma_m\cup\{\psi_m\}\rangle, & \text{if }\langle\Gamma_j,\Sigma_m\cup\{\psi_m\}\rangle \text{ is not separable};\\
\langle\Gamma_j,\Sigma_m\cup\{\neg\psi_m\}\rangle, & \text{if }\langle\Gamma_j,\Sigma_m\cup\{\psi_m\}\rangle \text{ is separable}
\end{cases}
\end{align*}
Analogously, we will prove that for any $m\leq k$, $t_{j+m}$ is not separable. Let $t_{j+m+1}$ be separable. Hence there are $\chi_1,\chi_2$, such that $\V(\chi_1)\subseteq\V(\Gamma_j)\cap\V(\Sigma_m\cup\{\psi_{m+1}\})$, $\V(\chi_2)\subseteq\V(\Gamma_j\})\cap\V(\Sigma_m\cup\{\neg\psi_{m+1}\})$ and all the pairs: $\langle\Gamma_j,\{\chi_1\}\rangle$, $\langle\{\chi_1\},\Sigma_m\cup\{\psi_{m+1}\}\rangle$, $\langle\Gamma_j,\{\chi_2\}\rangle$ and $\langle\{\chi_2\},\Sigma_m\cup\{\neg\psi_{m+1}\}\rangle$ are not realisable. In this case $\{\chi_1\wedge\chi_2\}$ separates $\langle\Gamma_j,\Sigma_m\rangle$ which gives us the desired result.

Now, we are going to prove that $t_{j+k}=\langle\Gamma_j,\Sigma_k\rangle$ is realisable. We will define the model $\m=\model$ in the following way: 
\begin{align*}
v(p)=1 \text{ iff }& p\in\Gamma_j \text{ or } \neg p\in\Sigma_k\\
\langle\varphi,\psi\rangle\in\r \text{ iff }&\text{at least one of the followin is true:}\\
&\text{either }\varphi\wto\psi\in\Gamma_j,\tag{a}\label{a}\\
&\text{or }\varphi\wi\psi\in\Gamma_j,\tag{b}\label{b}\\
&\text{or }\neg(\varphi\wto\psi)\in\Sigma_k,\tag{c}\label{c}\\
&\text{or }\neg(\varphi\wi\psi)\in\Sigma_k,\tag{d}\label{d}
\end{align*}
We will prove that:
\begin{enumerate}
\item for any $\chi\in\Gamma_j$, $\m\vDash\chi$ iff $\chi\in\Gamma_j$,
\item for any $\sigma\in\Sigma_k$, $\m\nvDash\sigma$ iff $\sigma\in\Sigma_k$
\end{enumerate}

We shall start with 1.
Assume that $\chi=p\in\V$. If $p\in\Gamma_j$, then by definition $\m\vDash p$. Let $\m\vDash p$. $p\in\Gamma_j$ gives us the result immediately, so assume $\neg p\in\Sigma_k$. By our construction either $p\in\Gamma_j$ or $\neg p\in\Gamma_j$. If $\neg p\in\Gamma_j$, then $\{\neg p\}$ separates $\langle\Gamma_j,\Sigma_k\rangle$ which gives us contradiction, hence $p\in\Gamma_j$. Let $\chi=\gamma\wedge\delta$. Assume that $\gamma\wedge\delta\in\Gamma_j$. Hence $\gamma\in\Gamma_j$ or $\neg\gamma\in\Gamma_j$ and similarly $\delta\in\Gamma_j$ or $\neg\delta\in\Gamma_j$. If $\neg\gamma\in\Gamma_j$, then it is easy to see that $\{\bot\}$ separates $\langle\Gamma_j,\Sigma_k\rangle$, which leads to contradiction. The case when $\neg\delta\in\Sigma_k$ is analogous. Hence both $\gamma\in\Gamma_j$ and $\delta\in\Gamma_j$, so by inductive hypothesis $\m\vDash\gamma$ and $\m\vDash\delta$, which means $\m\vDash\gamma\wedge\delta$.
Assume now that $\m\vDash\gamma\wedge\delta$. Obviously $\m\vDash\gamma$ and $\m\vDash\delta$, so by inductive hypothesis $\gamma\in\Gamma_j$, $\delta\in\Gamma_j$. If $\neg(\gamma\wedge\delta)\in\Gamma_j$, then it is easy to see that $\{\bot\}$ separates $\langle\Gamma_j,\Sigma_k\rangle$, so $\gamma\wedge\delta\in\Gamma_j$.
The rest of the boolean cases are treated similarly, so we move on to Epstein connectives. Let $\chi=\gamma\wto\delta$. If $\m\vDash\gamma\wto\delta$, then $\langle\gamma,\delta\rangle\in\r$ which means that one of the four cases from the definition is true.
The first case \ref{a} immediately leads to the desired result. Assume \ref{b}. 
Assume for reductio that $\gamma\wto\delta\notin\Gamma_j$, which means $\neg(\gamma\wto\delta)\in\Gamma_j$. Then $\{\bot\}$ separates $\langle\Gamma_j,\Sigma_k\rangle$, because there is no $\m'\in\M$, such that $\m'\vDash\gamma\wi\delta$ and $\m'\vDash\neg(\gamma\wto\delta)$. Contradiction, so it must be the case that $\gamma\wto\delta\in\Gamma_j$. Similarly, assume \ref{c} and $\neg(\gamma\wto\delta)\in\Gamma_j$. then it can easily be seen that $\{\neg(\gamma\wto\delta)\}$ separates $\langle\Gamma_j,\Sigma_k\rangle$, so $\gamma\wto\delta\in\Gamma_j$. Assume \ref{d} holds and $\neg(\gamma\wto\delta)\in\Gamma_j$. Then it can be easily observed that $\{\neg(\gamma\wto\delta)\}$, as well as $\{\neg(\gamma\wi\delta)\}$ separates $\langle\Gamma_j,\Sigma_k\rangle$, so $\gamma\wto\delta\in\Gamma_j$.

For the other direction assume $\gamma\wto\delta\in\Gamma_j$. Hence we already know that $\langle\gamma,\delta\rangle\in\Gamma_j$. 
If both $\gamma\in\Gamma_j$ and $\delta\notin\Gamma_j$, then it is easy to see that $\{\bot\}$ separates $\langle\Gamma_j,\Sigma_k\rangle$. For this reason it is the case that $\gamma\notin\Gamma_j$ or $\delta\in\Gamma_j$. By inductive hypothesis, we get $\m\vDash\gamma$ and $\m\vDash\delta$. This immediately leads to $\m\vDash\gamma\wto\delta$. 

Assume now that $\chi=\gamma\wi\delta$. If $\m\vDash\gamma\wi\delta$, then $\langle\gamma,\delta\rangle\in\r$, so again we have four cases to consider. We can say already that \ref{b} gives us the result immediately. Also we have $\m\vDash\gamma$ and $\m\vDash\delta$. By hypothesis: $\gamma\in\Gamma_j$ and $\delta\in\Gamma_j$. Assume \ref{a}. If $\gamma\wi\delta\notin\Gamma_j$, then we obtain the contradiction, since $\{\bot\}$ separates $\langle\Gamma_j,\Sigma_k\rangle$. Now assume \ref{c} or \ref{d}. If $\gamma\wi\delta\notin\Gamma_j$, then in both cases $\{\neg(\gamma\wi\delta)\}$ separates $\langle\Gamma_j,\Sigma_k\rangle$, so we obtain $\gamma\wi\delta\in\Gamma_j$. For the other direction assume $\gamma\wi\delta\in\Gamma_j$. Hence $\langle\gamma,\delta\rangle\in\r$. It is also easy to observe that it must be the case that $\gamma\in\Gamma_j$ and $\delta\in\Gamma_j$ (otherwise we get the separation by $\{\bot\}$), so by inductive hypothesis we get $\m\vDash\gamma\wi\delta$.


For 2. assume that $\m\nvDash p$. Hence by definition of $\m$, we have $\neg p\notin\Sigma_k$, which means $p \in\Sigma_k$. Assume $p\in\Sigma_k$. If $p\in\Gamma_j$, then $\{p\}$ separates $\langle\Gamma_j,\Sigma_k\rangle$, so $p\notin\Gamma_j$. Also $\neg p\notin\Sigma_k$, so $\m\nvDash p$. We shall skip the boolean cases and move to Epstein connectives right away. Let $\chi=\gamma\wto\delta$. Assume that $\m\nvDash\gamma\wto\delta$. Hence we have two possibilities: either i)$\langle\gamma,\delta\rangle\notin\r$ or ii)$\m\vDash\gamma$ and $\m\nvDash\delta$. By i) we get that neither of the conditions \ref{a}--\ref{d} holds, so \ref{c} gives us $\neg(\gamma\wto\delta)\notin\Sigma_k$ which means $\gamma\wto\delta\in\Sigma_k$. Assume ii) holds. Hence by inductive hypothesis $\neg\gamma\in\Sigma_k$ and $\delta\in\Sigma_k$. If $\gamma\wto\delta\notin\Sigma_k$, then $\neg(\gamma\wto\delta)\in\Sigma_k$. But in such case it is easy to see that $\{\neg\bot\}$ separates $\langle\Gamma_j,\Sigma_k\rangle$, because there is no $\m'$ such that $\m'\vDash\gamma\wto\delta$ and $\m'\vDash\gamma$, $\m'\nvDash\delta$. For this reason we obtain $\gamma\wto\delta\in\Sigma_k$.

For the other direction assume $\gamma\wto\delta\in\Sigma_k$. Assume now that \ref{a} holds. Then obviously $\{\gamma\wto\delta\}$ separates $\langle\Gamma_j,\Sigma_k\rangle$. So \ref{a} does not hold. For similar reasons, it must be the case that \ref{b} does not hold. It is obvious that \ref{c} cannot be true. If \ref{d} holds, then it also must be the case that $\{\neg\bot\}$ separates $\langle\Gamma_j,\Sigma_k\rangle$, since there is no $\m'\in\M$ such that $\m'\vDash\gamma\wi\delta$ and $\m'\nvDash\gamma\wto\delta$. Hence \ref{d} is not true, so $\langle\gamma,\delta\rangle\notin\r$ which means $\m\nvDash\gamma\wto\delta$.

Let $\chi=\gamma\wi\delta$. Assume $\m\nvDash\gamma\wi\delta$. Hence either i)$\langle\gamma,\delta\rangle\notin\r$, or  ii)$\m\nvDash\gamma$ or $\m\nvDash\delta$. Again, i) gives us the result immediately, so assume ii). By inductive hypothesis we get $\gamma\in\Sigma_k$ or $\delta\in\Sigma_k$. Assume for reductio that $\neg(\gamma\wi\delta)\in\Sigma_k$. There is no model $\m'$ such that $\m\vDash\gamma\wi\delta$ and ($\m\nvDash\gamma$ or $\m\nvDash\delta$), so $\{\neg\bot\}$ separates $\langle\Gamma_j,\Sigma_k\rangle$ -- contradiction. Hence $\gamma\wi\delta\in\Sigma_k$.
For the other direction assume $\gamma\wi\delta\in\Sigma_k$. Obviously \ref{d} does not hold. If \ref{c} holds, then it can be easily checked that it must be the case that $\gamma\in\Sigma_k$ or $\delta\in\Sigma_k$ (otherwise we get separation by $\{\neg\bot\}$). Hence $\m\nvDash\gamma$ or $\m\nvDash\delta$ which by inductive hypothesis leads to $\m\nvDash\gamma\wi\delta$. When \ref{b} holds, then $\{\gamma\wi\delta\}$ separates $\langle\Gamma_j,\Sigma_k\rangle$, so \ref{b} cannot be true. Assume \ref{a}. Assume further that $\neg\gamma\in\Sigma_k$ and $\neg\delta\in\Sigma_k$. But then $\{\gamma\wto\delta\}$ separates $\langle\Gamma_j,\Sigma_k\rangle$(to see that observe that for no $\m'$: $\m'\vDash\gamma\wto\delta$, $\m'\vDash\gamma,\m'\vDash\delta$ and $\m'\nvDash\gamma\wi\delta$). It must be the case then that $\gamma\in\Sigma_k$ or $\delta\in\Sigma_k$ which by inductive hypothesis means either $\m\nvDash\gamma$ or $\m\nvDash\delta$, so $\m\nvDash\gamma\wi\delta$. 

This way, we have proven that $t_{j+m+1}$ is realisable which means that $\m\vDash\varphi$ and $\m\nvDash\psi$. This ends the proof.
\end{proof}

We finish with two additional remarks.
\begin{remark}
Observe that the relation in model $\m$ from the proof of theorem \ref{t:interpolation} could be defined in a following way: $\langle\varphi,\psi\rangle\in\r$ iff $\neg(\varphi\wto\psi)\notin\Gamma_j$ or $\neg(\varphi\wi\psi)\notin\Sigma_k$ or $\varphi\wto\psi\notin\Gamma_j$ or $\varphi\wi\psi\notin\Sigma_k$. Note that, although the change seem to be purely cosmetic, thus defined relation can be radically different from the one defined in the previous part! To see that, let $\langle\Gamma_j,\Sigma_k\rangle=\langle\{p\},\{q\}\rangle$. According to the first definition $\r=\emptyset$, whilst the current one gives us $\r=\for\times\for$. Actually, the same could be done with the definition of $v$.
\end{remark}

One may also wonder if the analogous result of interpolation theorem could be obtained for Epstein implication.
\begin{remark}
The interpolation theorem holds trivially for Epstein implication, that is $\varphi\wto\psi\in\mathcal{F}$ implies existence of $\chi$, such that $\V(\chi)\subseteq\V(\varphi)\cap\V(\psi)$, $\varphi\wto\chi\in\mathcal{F}$ and $\chi\wto\psi\in\mathcal{F}$.
\end{remark}
\begin{proof}
Let $\r=\emptyset$ and $v$ be arbitrary. Observe that for any $\varphi,\psi\in\for$, we have $\model\nvDash\varphi\wto\psi$. For this reason $\varphi\wto\psi\notin\mathcal{F}$.
\end{proof}

\section{Conclusion}
In the paper, we have used model-theoretic (mainly the $\SS$--set) techniques to obtain some results on generalised Epstein semantics. We hope that our work is a step towards building a serious theory of Epstein semantics. We showed that each Epstein model has uncountably many equivalent models. We have also proven that the logic of Epstein structures is the $\SS$-set invariant fragment of $\cpl$. Additionally, we isolated continuum many extensions of $\mathcal{F}$ which are Epstein-incomplete.
We have presented a strategy for proving completeness with respect to undefinable sets of Epstein relations. What is important, is that we did not have to incorporate any extra rules to the system $\rF$ to achieve the result -- we have simply enriched $\rF$ with extra axioms. What also seems worth mentioning, is that providing the axiom system for the intersection of $\R^s$ and $\R^n$, was not just a matter of combining the system $\rF S$ and $\rF N$. To obtain completeness, we had to include additional axioms. Both undefinability and completeness were proven by means of the $\SS$-set construction. 
\bibliographystyle{plain}
\bibliography{krawczyk_epstein_semantics}
Krzysztof Aleksander Krawczyk
\\Department of Logic
\\Jagiellonian University in Krakow, Poland
\\krzysztof1.krawczyk@uj.edu.pl

\end{document}